\providecommand{\U}[1]{\protect\rule{.1in}{.1in}}
\newtheorem{theorem}{Theorem}[section]
\newtheorem{corollary}[theorem]{Corollary}
\newtheorem{lemma}[theorem]{Lemma}
\newtheorem{proposition}[theorem]{Proposition}
\newtheorem{remark}[theorem]{Remark}
\newenvironment{proof}[1][Proof]{\noindent\textbf{#1.} }{\ \rule{0.5em}{0.5em}}
\begin{document}

\title{An optimal pointwise Morrey-Sobolev inequality}
\author{Grey Ercole$^{\text{\thinspace a}}$\ and Gilberto A.
Pereira$^{\text{\thinspace b}}\medskip$\\{\small {$^{\mathrm{a}}$} Universidade Federal de Minas Gerais, Belo
Horizonte, MG, 30.123-970, Brazil}\\{\small grey@mat.ufmg.br}\\{\small {$^{\mathrm{b}}$} Universidade Federal de Ouro Preto, Ouro Preto, MG,
35.400-000, Brazil.}\\{\small gilberto.pereira@ufop.edu.br }}
\maketitle

\begin{abstract}
Let $\Omega$ be a bounded, smooth domain of $\mathbb{R}^{N},$ $N\geq1.$ For
each $p>N$ we study the optimal function $s=s_{p}$ in the pointwise inequality%
\[
\left\vert v(x)\right\vert \leq s(x)\left\Vert \nabla v\right\Vert
_{L^{p}(\Omega)},\quad\forall\,(x,v)\in\overline{\Omega}\times W_{0}%
^{1,p}(\Omega).
\]
We show that $s_{p}\in C_{0}^{0,1-(N/p)}(\overline{\Omega})$ and that $s_{p}$
converges pointwise to the distance function to the boundary, as
$p\rightarrow\infty.$ Moreover, we prove that if $\Omega$ is convex, then
$s_{p}$ is concave and has a unique maximum point.

\end{abstract}

{\small \noindent\textbf{2010 AMS Classification:} 35D40; 35J70; 35P30. }

{\small \noindent\textbf{Keywords:} Dirac delta distribution, infinity
Laplacian, Morrey-Sobolev inequality.}

\section{Introduction}

The well-known Morrey's inequality in $\mathbb{R}^{N}$ states that if $p>N$
then
\begin{equation}
\left\vert v(x)-v(y)\right\vert \leq C_{p,N}\left\vert x-y\right\vert
^{1-(N/p)}\left(  \int_{\mathbb{R}^{N}}\left\vert \nabla v\right\vert
^{p}\mathrm{d}x\right)  ^{\frac{1}{p}},\quad\forall\,x,y\in\mathbb{R}^{N}%
\quad\mathrm{and}\quad v\in W^{1,p}(\mathbb{R}^{N}), \label{MorreyRN}%
\end{equation}
where $C_{p,N}$ is a positive constant depending only on $p$ and $N,$ whose
optimal value is still unknown for $N\geq2.$

For $N=1$ the optimal constant $C_{p,1}$ in (\ref{MorreyRN}) is known to be
$1$ and, for $N\geq2,$ expressions that appear in standard proofs of
(\ref{MorreyRN}) are
\begin{equation}
C_{p,N}=\frac{2pN}{p-N}\quad\mathrm{and}\quad C_{p,N}=\frac{C(N)}%
{\sqrt[p]{N\omega_{N}}}\left(  \frac{p-1}{p-N}\right)  ^{\frac{p-1}{p}},
\label{CpN}%
\end{equation}
where $C(N)$ is a constant depending only on $N$ and $\omega_{N}$ is the
$N$-dimensional volume of the unit ball.

Now, let $\Omega$ be a bounded, smooth domain of $\mathbb{R}^{N}$ and let
$d_{\Omega}$ denote the distance function to the boundary $\partial\Omega,$
that is,
\[
d_{\Omega}(x):=\inf_{y\in\partial\Omega}\left\vert x-y\right\vert ,\quad
x\in\overline{\Omega}.
\]
Taking an arbitrary $y\in\partial\Omega$ in (\ref{MorreyRN}) one arrives at
the following pointwise inequality%
\begin{equation}
\left\vert v(x)\right\vert \leq C_{p,N}\left(  d_{\Omega}(x)\right)
^{1-(N/p)}\left\Vert \nabla v\right\Vert _{p},\quad\forall\,(x,v)\in
\overline{\Omega}\times W_{0}^{1,p}(\Omega), \label{sx}%
\end{equation}
where, for $N<p\leq\infty$, $\left\Vert \cdot\right\Vert _{p}$ stands for the
standard norm of $L^{p}(\Omega)$ (a notation that will be kept throughout the paper).

Note that $\left(  d_{\Omega}\right)  ^{1-(N/p)}\in C_{0}^{0,1-(N/p)}%
(\overline{\Omega}),$ the space of the functions that vanish on the boundary
$\partial\Omega$ and are $(1-(N/p))$-H\"{o}lder continuous in $\overline
{\Omega}.$

Passing to the maximum values in (\ref{sx}) we arrive at the well-known
Morrey-Sobolev inequality%
\begin{equation}
\left\Vert v\right\Vert _{\infty}\leq C_{p,N,\Omega}\left\Vert \nabla
v\right\Vert _{p},\quad\forall\,v\in W_{0}^{1,p}(\Omega), \label{MS}%
\end{equation}
where the constant $C_{p,N,\Omega}$ depends only on $p,N$ and $\Omega.$

In this paper we study the function%
\begin{equation}
s_{p}(x):=\left\{
\begin{array}
[c]{lll}%
\sup\left\{  \left\vert v(x)\right\vert /\left\Vert \nabla v\right\Vert
_{p}:v\in W_{0}^{1,p}(\Omega)\setminus\left\{  0\right\}  \right\}  &
\mathrm{if} & x\in\Omega\\
0 & \mathrm{if} & x\in\partial\Omega,
\end{array}
\right.  \label{spx}%
\end{equation}
which is the optimal function in the pointwise (version of) Morrey-Sobolev
inequality
\begin{equation}
\left\vert v(x)\right\vert \leq s(x)\left\Vert \nabla v\right\Vert _{p}%
,\quad\forall\,(x,v)\in\overline{\Omega}\times W_{0}^{1,p}(\Omega).
\label{sMS}%
\end{equation}

Clearly, $s_{p}$ satisfies (\ref{sMS}) and if $s:\overline{\Omega}%
\rightarrow\lbrack0,\infty)$ satisfies (\ref{sMS}), then $s_{p}\leq s$
pointwise in $\overline{\Omega}.$ This fact and (\ref{sx}) imply that
\begin{equation}
0<s_{p}(x)\leq C_{p,N}\left(  d_{\Omega}(x)\right)  ^{1-(N/p)}\quad
\forall\,x\in\Omega, \label{sharpness}%
\end{equation}
for every constant $C_{p,N}$ satisfying (\ref{MorreyRN}). Therefore, $s_{p}$
is continuous at the boundary points.

In Section \ref{Sec2} (see Theorem \ref{Main1}) we show that for each
$x\in\Omega$ given, there exists a (unique) function $u_{p}\in W_{0}%
^{1,p}(\Omega)$ that is positive in $\Omega$, assumes the maximum value $1$
uniquely at $x$ and satisfies%

\begin{equation}
s_{p}(x)=(\left\Vert \nabla u_{p}\right\Vert _{p})^{-1}. \label{2}%
\end{equation}
Using these facts and (\ref{sharpness}) we prove that $s_{p}\in C_{0}%
^{0,1-(N/p)}(\overline{\Omega}).$

We emphasize that, actually (see Remark \ref{Green}),
\[
s_{p}(x)=(G_{p}(x;x))^{\frac{p-1}{p}}\quad\mathrm{and}\quad u_{p}%
(y)=\frac{G_{p}(y;x)}{G_{p}(x;x)},\quad\forall\,y\in\Omega,
\]
where $G_{p}(\cdot;x)$ denotes the Green function of the $p$-Laplacian in
$\Omega$ with pole at $x.$

Alternatively, as it can be noticed from \cite{Jan},
\[
s_{p}(x)=(\operatorname{cap}_{p}(\left\{  x\right\}  ,\Omega))^{-\frac{1}{p}}%
\]
for each $x\in\Omega,$ where%
\[
\operatorname{cap}_{p}(\left\{  x\right\}  ,\Omega):=\inf\left\{  \left\Vert
\nabla u\right\Vert _{p}^{p}:u\in W_{0}^{1,p}(\Omega)\cap C(\Omega),\quad
u(x)\geq1\right\}
\]
denotes the $p$-capacity of the punctured domain $\Omega\setminus\left\{
x\right\}  .$ Hence, as consequence of (\ref{2}), $u_{p}$ is the
$p$-capacitary function corresponding to $\operatorname{cap}_{p}(\left\{
x\right\}  ,\Omega)$.

Still in Section \ref{Sec2} (see Corollary \ref{1d}), we derive an explicit
expression of $s_{p}$ for the unidimensional case, where $\Omega$ is an
interval. We also argue that in the case where $\Omega$ is a multidimensional
ball the function $s_{p}$ is radially symmetric and radially decreasing. Even
though, it seems to be very difficult to derive an explicit expression for
$s_{p}$ in this case. We recall that an explicit expression for the Green
function of the $p$-Laplacian for a ball is not available if $p>2.$

In Section \ref{Sec3} (see Proposition \ref{uinf}) we prove that
\[
\lim_{p\rightarrow\infty}s_{p}(x)=d_{\Omega}(x),\quad\forall\,x\in
\overline{\Omega}.
\]
Moreover, for each $x\in\Omega$ we show that the function $u_{p}$ satisfying
(\ref{2}) converges uniformly, as $p\rightarrow\infty$, to a function
$u_{\infty}\in W^{1,\infty}(\Omega)\cap C_{0}(\overline{\Omega})$ that is
infinity harmonic in the punctured domain $\Omega\setminus\left\{  x\right\}
$ (see Theorem \ref{Main2}).

In Section \ref{Sec4} (see Theorem \ref{convex1}), we prove that if $\Omega$
is convex, then the function $s_{p}$ is concave and has a unique maximum
point. The concavity proof is adapted from arguments developed by Hynd and
Lindgren \cite{HL}. The uniqueness of the maximum point is a direct
consequence of their main result: the extremal functions for the
Morrey-Sobolev inequality (\ref{MS}) are scalar multiple of each other and
achieve the maximum value uniquely at a same point.

\section{The optimal function$\label{Sec2}$}

In this section, $\Omega$ is a bounded, smooth domain of $\mathbb{R}^{N}$ and
$p>N\geq1.$ For each $x\in\Omega$ we define
\[
\mathcal{M}_{p}(x):=\left\{  v\in W_{0}^{1,p}(\Omega):\left\vert
v(x)\right\vert =\left\Vert v\right\Vert _{\infty}=1\right\}
\]
and
\begin{equation}
\mu_{p}(x):=\min_{v\in\mathcal{M}_{p}(x)}\left\Vert \nabla v\right\Vert
_{p}^{p}. \label{lampS1}%
\end{equation}

We recall that the Dirac Delta distribution $\delta_{x}$ is the linear
functional defined by%
\[
\left\langle \delta_{x},\phi\right\rangle :=\phi(x),\quad\forall\,\phi\in
W_{0}^{1,p}(\Omega).
\]
Actually, by virtue of (\ref{MS}), $\delta_{x}$ belongs to the dual of
$W_{0}^{1,p}(\Omega),$ commonly denoted by $W^{-1,p^{\prime}}(\Omega),$
$(1/p)+(1/p^{\prime})=1.$

\begin{proposition}
\label{mumin}Let $x\in\Omega$ be fixed. There exists $v\in\mathcal{M}_{p}(x)$
such that
\begin{equation}
\mu_{p}(x)=\left\Vert \nabla v\right\Vert _{p}^{p}. \label{aux4}%
\end{equation}

\end{proposition}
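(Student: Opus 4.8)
The plan is a standard direct-method argument. Fix $x\in\Omega$. First I would check that $\mathcal{M}_{p}(x)$ is nonempty: any bump function supported in a small ball around $x$, normalized so that its value at $x$ equals its sup-norm equals $1$, lies in $\mathcal{M}_{p}(x)$, so $\mu_{p}(x)$ is a well-defined infimum of a nonempty set of nonnegative reals. Next, take a minimizing sequence $(v_{n})\subset\mathcal{M}_{p}(x)$, i.e. $\left\Vert \nabla v_{n}\right\Vert_{p}^{p}\to\mu_{p}(x)$. Since $\left\Vert \nabla v_{n}\right\Vert_{p}$ is bounded and $W_{0}^{1,p}(\Omega)$ satisfies the Poincar\'e inequality, the full $W^{1,p}$-norms of $v_{n}$ are bounded; by reflexivity of $W_{0}^{1,p}(\Omega)$ (recall $p>N\geq 1$, so $p>1$) we may pass to a subsequence with $v_{n}\rightharpoonup v$ weakly in $W_{0}^{1,p}(\Omega)$.

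The two things to verify for $v$ are: (i) $v\in\mathcal{M}_{p}(x)$, and (ii) $\left\Vert\nabla v\right\Vert_{p}^{p}\leq\mu_{p}(x)$, which together with the definition of $\mu_{p}(x)$ as an infimum force equality. Part (ii) is immediate from the weak lower semicontinuity of the norm $v\mapsto\left\Vert\nabla v\right\Vert_{p}$ on $W_{0}^{1,p}(\Omega)$. For part (i), I would use the compact Morrey embedding $W_{0}^{1,p}(\Omega)\hookrightarrow\hookrightarrow C^{0,\alpha'}(\overline{\Omega})$ for any $\alpha'<1-(N/p)$ (valid precisely because $p>N$ and $\Omega$ is bounded and smooth): along a further subsequence $v_{n}\to v$ uniformly on $\overline{\Omega}$. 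Uniform convergence gives $v(x)=\lim v_{n}(x)$, and since each $|v_{n}(x)|=1$ we get $|v(x)|=1$; it also gives $\left\Vert v\right\Vert_{\infty}=\lim\left\Vert v_{n}\right\Vert_{\infty}=1$. Hence $v\in\mathcal{M}_{p}(x)$, and $\left\Vert\nabla v\right\Vert_{p}^{p}\geq\mu_{p}(x)$ by definition, while (ii) gives the reverse inequality, so \eqref{aux4} holds.

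The only mild subtlety — the step I would flag as the place to be careful rather than a genuine obstacle — is the interplay between the two modes of convergence: weak $W^{1,p}$-convergence is what delivers lower semicontinuity of the gradient norm, while compact embedding into $C^{0}(\overline{\Omega})$ is what lets us pass the pointwise constraint $|v(x)|=1$ and the sup-norm constraint to the limit. One should extract a single subsequence along which both hold (first the weak one, then refine using compactness), and note that $\left\Vert\cdot\right\Vert_{\infty}$ is continuous, not merely lower semicontinuous, under uniform convergence, so the normalization $\left\Vert v\right\Vert_{\infty}=1$ is preserved exactly and the constraint set does not ``open up'' in the limit. No convexity or uniqueness is claimed here, so nothing beyond these compactness facts is needed.
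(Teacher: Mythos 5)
Your proof is correct and follows essentially the same route as the paper: a minimizing sequence, weak convergence by reflexivity, uniform convergence by the compact embedding into $C(\overline{\Omega})$ to preserve the constraints $|v(x)|=\Vert v\Vert_{\infty}=1$, and weak lower semicontinuity of the gradient norm to conclude. The extra details you supply (nonemptiness of $\mathcal{M}_{p}(x)$, Poincar\'e, careful subsequence extraction) are fine but not a different argument.
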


\begin{proof}
Let $\left\{  v_{n}\right\}  \subset\mathcal{M}_{p}(x)$ be such that
$\left\Vert \nabla v_{n}\right\Vert _{p}^{p}\rightarrow\mu_{p}(x).$ As
$W_{0}^{1,p}(\Omega)$ is reflexive and compactly embedded in $C(\overline
{\Omega})$ we can assume, without loss of generality, that%
\[
v_{n}\rightharpoonup v\quad\mathrm{weakly\,in\,}W_{0}^{1,p}(\Omega
)\mathrm{\quad and\quad}v_{n}\rightarrow v\quad
\mathrm{uniformly\mathrm{\,in\,}}C(\overline{\Omega}),
\]
for some $v\in W_{0}^{1,p}(\Omega)\cap C(\overline{\Omega}).$ As $\left\Vert
v_{n}\right\Vert _{\infty}=\left\vert v_{n}(x)\right\vert =1,$ the uniform
convergence implies that $v\in\mathcal{M}_{p}(x).$ Hence, (\ref{aux4}) follows
since the weak convergence yields
\[
\mu_{p}(x)\leq\left\Vert \nabla v\right\Vert _{p}^{p}\leq\liminf
_{n\rightarrow\infty}\left\Vert \nabla v_{n}\right\Vert _{p}^{p}=\mu_{p}(x).
\]

\end{proof}

\begin{proposition}
\label{prop0}Let $x\in\Omega$ be fixed and let $u_{p}\in W_{0}^{1,p}(\Omega)$
be the only weak solution of the Dirichlet problem
\begin{equation}
\left\{
\begin{array}
[c]{rrll}%
-\Delta_{p}u & = & \mu_{p}(x)\delta_{x} & \mathrm{in}\,\Omega\\
u & = & 0 & \mathrm{on}\,\partial\Omega.
\end{array}
\right.  \label{updir}%
\end{equation}
Then, $u_{p}\in\mathcal{M}_{p}(x),$ is strictly positive in $\Omega,$ attains
its maximum value only at $x$ and
\[
\mu_{p}(x)=\left\Vert \nabla u_{p}\right\Vert _{p}^{p}.
\]

\end{proposition}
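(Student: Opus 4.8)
The plan is to connect the variational minimizer from Proposition~\ref{mumin} with the PDE solution of (\ref{updir}) by way of the Euler--Lagrange equation, then to read off the remaining qualitative properties from the structure of that equation. First I would take $v\in\mathcal{M}_p(x)$ achieving the minimum $\mu_p(x)$ from Proposition~\ref{mumin}. The competitor class $\mathcal{M}_p(x)$ is awkward because it carries two constraints ($v(x)=1$ and $\|v\|_\infty=1$), so the cleanest route is to remove the sup-norm constraint: for $w\in W_0^{1,p}(\Omega)$ with $w(x)=1$ one has (by the pointwise bound (\ref{sMS}), or simply by definition of $s_p$) that $\|\nabla w\|_p^p\ge\mu_p(x)$ as well, so in fact
\[
\mu_p(x)=\min\left\{\|\nabla w\|_p^p : w\in W_0^{1,p}(\Omega),\ w(x)=1\right\},
\]
and at a minimizer the $\|w\|_\infty=1$ requirement is automatic once we know the minimizer attains its max only at $x$. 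Working with the single linear constraint $w(x)=\langle\delta_x,w\rangle=1$, the Lagrange multiplier / first-variation computation gives that any minimizer $v$ satisfies, for all $\phi\in W_0^{1,p}(\Omega)$,
\[
\int_\Omega |\nabla v|^{p-2}\nabla v\cdot\nabla\phi\,\mathrm{d}x=\lambda\,\phi(x)
\]
for some $\lambda\in\mathbb{R}$; testing with $\phi=v$ yields $\lambda=\|\nabla v\|_p^p=\mu_p(x)$. Hence $v$ is a weak solution of (\ref{updir}), and by the uniqueness of that solution (the $p$-Laplacian Dirichlet problem with a fixed $W^{-1,p'}$ datum has a unique weak solution, since $-\Delta_p$ is strictly monotone) we conclude $u_p=v$. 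In particular $u_p\in\mathcal{M}_p(x)$ and $\mu_p(x)=\|\nabla u_p\|_p^p$, once positivity and the uniqueness of the maximum are established.

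For strict positivity: $u_p$ weakly $p$-harmonic in $\Omega\setminus\{x\}$ with boundary value $0$ on $\partial\Omega$ and value $1$ at $x$; since $\mu_p(x)\delta_x\ge0$, $u_p$ is $p$-superharmonic in $\Omega$, so by the strong minimum principle for $p$-superharmonic functions (it is nonnegative by replacing $u_p$ with $|u_p|$ or $u_p^+$, which does not increase $\|\nabla\cdot\|_p$ and keeps the constraint, hence a minimizer is $\ge0$) it is either $\equiv0$ — impossible, as $u_p(x)=1$ — or strictly positive in $\Omega$. For the uniqueness of the maximum point: in $\Omega\setminus\{x\}$ the function $u_p$ is $p$-harmonic, so it satisfies the strong maximum principle there; if it attained the value $1=\|u_p\|_\infty$ at some $y\ne x$, it would be constant near $y$ and then, by connectedness of $\Omega\setminus\{x\}$ and unique continuation / the strong maximum principle, constant $\equiv1$ on $\Omega\setminus\{x\}$, contradicting the boundary condition. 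Thus the maximum is attained only at $x$, which also retroactively justifies $\|u_p\|_\infty=1$ and hence $u_p\in\mathcal{M}_p(x)$.

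The main obstacle I anticipate is the first-variation step: $\delta_x$ is not an $L^1$ function, so one must be careful that the functional $w\mapsto w(x)$ is genuinely $C^1$ (indeed bounded linear, by the Morrey embedding $W_0^{1,p}\hookrightarrow C(\overline\Omega)$ for $p>N$) and that the Lagrange-multiplier rule applies in this reflexive Banach space setting with a single nondegenerate linear constraint — the constraint gradient $\delta_x$ is nonzero in $W^{-1,p'}(\Omega)$, so the rule is available. A secondary technical point is the passage between the two-constraint class $\mathcal{M}_p(x)$ and the single-constraint minimization: one should verify that dropping $\|w\|_\infty\le1$ does not lower the infimum, which follows because any admissible $w$ with $w(x)=1$ can be truncated or rescaled without violating the constraint and without increasing the $p$-energy, or more directly because $s_p(x)^{-p}$ equals both infima by the definition (\ref{spx}). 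Once the Euler--Lagrange identification is in hand, positivity and the uniqueness of the maximum are routine applications of the strong maximum/minimum principles for the $p$-Laplacian, and the identity $\mu_p(x)=\|\nabla u_p\|_p^p$ drops out of testing the equation against $u_p$.
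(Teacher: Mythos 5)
Your argument is correct, but it identifies the PDE solution with the variational minimizer by a genuinely different mechanism than the paper. The paper starts from the equation: it takes the unique weak solution $u_{p}$ of (\ref{updir}) (bijectivity of the duality mapping), tests the weak formulation with $u_{p}$ to get $\mu_{p}(x)u_{p}(x)=\left\Vert \nabla u_{p}\right\Vert _{p}^{p}$, shows $\left\Vert u_{p}\right\Vert _{\infty}\leq1$ by comparing with $u_{p}/\left\Vert u_{p}\right\Vert _{\infty}\in\mathcal{M}_{p}(x)$, and then tests the equation with $\left\vert v\right\vert $, where $v$ is the minimizer from Proposition \ref{mumin}, using H\"{o}lder's inequality and its equality case to force $u_{p}(x)\geq1$; this yields $u_{p}\in\mathcal{M}_{p}(x)$, the energy identity, and, as a by-product used later, $v=\pm u_{p}$. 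You go the other way: you relax the sup-norm constraint by truncation (so $\mu_{p}(x)$ is the minimum of the $p$-energy under the single affine constraint $w(x)=1$), derive the Euler--Lagrange equation with multiplier identified as $\mu_{p}(x)$ by testing with $v$, and then use uniqueness of the weak solution (strict monotonicity of $-\Delta_{p}$) to conclude $u_{p}=v$, after which membership in $\mathcal{M}_{p}(x)$ and the energy identity are immediate and the qualitative properties follow from maximum principles much as in the paper. Your route has a cleaner variational structure and gives existence of the solution for free; the paper's route avoids any first-variation/Lagrange-multiplier discussion and obtains the uniqueness of minimizers up to sign in the same stroke. Three small touch-ups to your write-up: fix the sign of the Proposition \ref{mumin} minimizer so that $v(x)=+1$ (the class $\mathcal{M}_{p}(x)$ only imposes $\left\vert v(x)\right\vert =1$); for the nonnegativity of $u_{p}$ itself, either use the weak comparison principle as the paper does or note that $\left\vert v\right\vert $ is again a minimizer satisfying the same Euler--Lagrange equation and apply your uniqueness step (the parenthetical as written only shows that \emph{some} minimizer is nonnegative); and for $N=1$ the punctured domain $\Omega\setminus\left\{ x\right\}$ is disconnected, so the strong-maximum-principle argument for the uniqueness of the maximum should be applied on each component, whose closure meets $\partial\Omega$ where $u_{p}=0$.
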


\begin{proof}
The existence and the uniqueness of $u_{p}$ follow from the bijectivity of the
duality mapping (see \cite{DJM}) from $W_{0}^{1,p}(\Omega)$ into
$W_{0}^{-1,p^{\prime}}(\Omega)$ given by%
\[%
\begin{array}
[c]{cll}%
u & \mapsto & \left\langle -\Delta_{p}u,\phi\right\rangle :=\int_{\Omega
}\left\vert \nabla u\right\vert ^{p-2}\nabla u\cdot\nabla\phi\mathrm{d}x.
\end{array}
\]
Thus, since $\mu_{p}(x)\delta_{x}\in W^{-1,p^{\prime}}(\Omega)$, there exists
a unique function $u_{p}\in W_{0}^{1,p}(\Omega)$ satisfying%
\begin{equation}
\int_{\Omega}\left\vert \nabla u_{p}\right\vert ^{p-2}\nabla u_{p}\cdot
\nabla\phi\mathrm{d}y=\mu_{p}(x)\phi(x),\quad\forall\,\phi\in W_{0}%
^{1,p}(\Omega),\label{weakup}%
\end{equation}
which means that $u_{p}$ is the only weak solution of (\ref{updir}).

Taking an arbitrary nonnegative test function $\phi$ in (\ref{weakup}) we
conclude, by the weak comparison principle, that $u_{p}\geq0$ in $\Omega.$
Since $\mu_{p}(x)>0,$ the identity (\ref{weakup}) also guarantees that $u_{p}$
is not the null function. Using $\phi=u_{p}$ in (\ref{weakup}) we obtain%
\begin{equation}
\mu_{p}(x)u_{p}(x)=\left\Vert \nabla u_{p}\right\Vert _{p}^{p}. \label{prop0a}%
\end{equation}

Moreover, considering in (\ref{weakup}) an arbitrary test function $\phi$
supported in the punctured domain $\Omega\setminus\left\{  x\right\}  $ we can
see that $u_{p}$ is $p$-harmonic in this domain (i.e. $\Delta_{p}u_{p}=0$ in
$\Omega\setminus\left\{  x\right\}  $ in the weak sense). It follows that the
minimum and maximum values of $u_{p}$ are necessarily attained on the boundary
$\partial\Omega\cup\left\{  x\right\}  $ of $(\Omega\setminus\left\{
x\right\}  )$ (see \cite{Lq}). Consequently (recalling that $u_{p}=0$ on
$\partial\Omega$),
\[
0<u_{p}(y)<u_{p}(x)=\left\Vert u_{p}\right\Vert _{\infty}\quad\forall
\,y\in\Omega\setminus\left\{  x\right\}  .
\]

Combining (\ref{prop0a}) with the definition of $\mu_{p}(x)$ in (\ref{lampS1})
and observing that $u_{p}/\left\Vert u_{p}\right\Vert _{\infty}\in
\mathcal{M}_{p}(x)$ we arrive at%
\[
\frac{\left\Vert \nabla u_{p}\right\Vert _{p}^{p}}{u_{p}(x)}=\mu_{p}%
(x)\leq\frac{\left\Vert \nabla u_{p}\right\Vert _{p}^{p}}{\left\Vert
u_{p}\right\Vert _{\infty}^{p}},
\]
from which follows that $\left\Vert u_{p}\right\Vert _{\infty}\leq1.$

Now, let $v\in\mathcal{M}_{p}(x)$ such that $\mu_{p}(x)=\left\Vert \nabla
v\right\Vert _{p}^{p}$ (the existence of $v$ comes from the previous
proposition). As $\left\vert v\right\vert \in\mathcal{M}_{p}(x)$ and
$\left\Vert \nabla\left\vert v\right\vert \right\Vert _{p}=\left\Vert \nabla
v\right\Vert _{p}=\mu_{p}(x)$ we can take $\phi=\left\vert v\right\vert $ in
(\ref{weakup}) and use H\"{o}lder inequality to find
\begin{equation}
\mu_{p}(x)=\int_{\Omega}\left\vert \nabla u_{p}\right\vert ^{p-2}\nabla
u_{p}\cdot\nabla\left\vert v\right\vert \mathrm{d}y\leq\left\Vert \nabla
u_{p}\right\Vert _{p}^{p-1}\left\Vert \nabla\left\vert v\right\vert
\right\Vert _{p}=\left\Vert \nabla u_{p}\right\Vert _{p}^{p-1}(\mu
_{p}(x))^{1/p}. \label{prop0c}%
\end{equation}
Consequently,
\[
\mu_{p}(x)\leq\left\Vert \nabla u_{p}\right\Vert _{p}^{p},
\]
an inequality that, in view of (\ref{prop0a}), implies that $u_{p}(x)\geq1.$
It follows that $\left\Vert u_{p}\right\Vert _{\infty}=1$ (recall that
$\left\Vert u_{p}\right\Vert _{\infty}=u_{p}(x)$ and $\left\Vert
u_{p}\right\Vert _{\infty}\leq1$). This shows that $u_{p}\in\mathcal{M}%
_{p}(x)$ and, in view of (\ref{prop0a}), yields
\[
\mu_{p}(x)=\left\Vert \nabla u_{p}\right\Vert _{p}^{p}.
\]
Hence, H\"{o}lder's inequality in (\ref{prop0c}) becomes an equality and this
implies that $u_{p}=\left\vert v\right\vert .$ As $u_{p}>0$ in $\Omega,$ we
conclude that $v$ does not change sign in $\Omega,$ so that either $v=u_{p}$
or $v=-u_{p}.$
\end{proof}

In the sequel, $s_{p}$ denotes the best function in the pointwise inequality
(\ref{sMS}), defined by (\ref{spx}).

\begin{corollary}
One has
\[
s_{p}(x)=(\mu_{p}(x))^{-1/p},\quad\forall\,x\in\Omega.
\]

\end{corollary}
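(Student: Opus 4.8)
The plan is to connect the supremum defining $s_p(x)$ with the minimum $\mu_p(x)$ via a straightforward scaling argument, using Proposition \ref{prop0} to know that both are attained and by the same function. First I would observe that for any $v \in W_0^{1,p}(\Omega) \setminus \{0\}$ with $v(x) \neq 0$, the normalized function $w := v / \|v\|_\infty$ satisfies $|w(x)| \leq \|w\|_\infty = 1$, but to land in $\mathcal{M}_p(x)$ we need $|v(x)| = \|v\|_\infty$; since we are taking a supremum of $|v(x)| / \|\nabla v\|_p$, we may freely restrict attention to such $v$. The cleaner route is: for arbitrary $v \neq 0$ (with $v(x) \neq 0$, else the ratio is zero), set $w = v/|v(x)|$ so that $|w(x)| = 1 \leq \|w\|_\infty$. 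This $w$ need not be in $\mathcal{M}_p(x)$, so instead I would argue both inequalities directly.

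For the inequality $s_p(x) \leq (\mu_p(x))^{-1/p}$: take any $v \in W_0^{1,p}(\Omega) \setminus \{0\}$ with $v(x) \neq 0$ and set $w = v/\|v\|_\infty$. Then $w \in \mathcal{M}_p(x)$ is false in general, but $\tilde{w} = v/\|v\|_\infty$ has $\|\tilde w\|_\infty = 1$ and $|\tilde w(x)| \leq 1$ — still not enough. The correct normalization: since the ratio $|v(x)|/\|\nabla v\|_p$ is scale-invariant, assume $\|v\|_\infty = 1$. If additionally $|v(x)| = 1$ then $v \in \mathcal{M}_p(x)$ and $\|\nabla v\|_p^p \geq \mu_p(x)$, giving $|v(x)|/\|\nabla v\|_p = 1/\|\nabla v\|_p \leq (\mu_p(x))^{-1/p}$. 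If $|v(x)| < 1$, then $|v(x)|/\|\nabla v\|_p < 1/\|\nabla v\|_p$, and I would handle this by noting $v/\|v\|_\infty$ is not the issue — rather, one compares against a competitor in $\mathcal{M}_p(x)$. Actually the slickest argument: for any $v \neq 0$ with $v(x) \neq 0$, the function $v/v(x)$ satisfies $(v/v(x))(x) = 1$; if $\|v/v(x)\|_\infty = 1$ we're in $\mathcal{M}_p(x)$, otherwise $\|v/v(x)\|_\infty > 1$ and we can still use that $\mu_p(x) = \min$ over $\mathcal{M}_p(x)$ combined with the fact (from Proposition \ref{prop0}) that $u_p$ attains its max only at $x$. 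I would instead just prove the two bounds by exhibiting the extremal: by Proposition \ref{prop0}, $u_p \in \mathcal{M}_p(x)$ with $u_p(x) = \|u_p\|_\infty = 1$ and $\|\nabla u_p\|_p^p = \mu_p(x)$, so $s_p(x) \geq |u_p(x)|/\|\nabla u_p\|_p = (\mu_p(x))^{-1/p}$. Conversely, for any admissible $v$, rescale so $\|v\|_\infty = 1$; then $|v(x)| \leq 1$ and the function $v/|v(x)|$ (when $v(x)\neq 0$) lies in $W_0^{1,p}(\Omega)$ with value $1$ at $x$ but possibly sup-norm $> 1$ — so this doesn't immediately work, and the honest fix is: $|v(x)|/\|\nabla v\|_p \leq \|v\|_\infty/\|\nabla v\|_p$, and taking the sup of the right side gives the Morrey-Sobolev constant, not $\mu_p(x)^{-1/p}$.

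The clean resolution, which I expect to be the heart of the (short) proof: for arbitrary $v \in W_0^{1,p}(\Omega) \setminus \{0\}$, if $v(x) = 0$ the ratio is $0 \leq (\mu_p(x))^{-1/p}$; if $v(x) \neq 0$, replace $v$ by $|v|/\|v\|_\infty$ — wait, this changes $|v(x)|$. The actual argument the authors intend: among all $v$ with $v(x) \neq 0$, the supremum of $|v(x)|/\|\nabla v\|_p$ is unchanged if we restrict to $v$ normalized by $v(x) = 1$, giving $\sup \{ \|\nabla v\|_p^{-1} : v(x) = 1 \} = (\inf\{\|\nabla v\|_p^p : v(x) = 1\})^{-1/p}$. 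Then I claim $\inf\{\|\nabla v\|_p^p : v(x) = 1\} = \mu_p(x)$: the inequality $\leq$ is clear since $\mathcal{M}_p(x) \subset \{v : v(x) = 1\}$ up to sign; for $\geq$, given $v$ with $v(x) = 1$, we have $\|v\|_\infty \geq 1$, and WLOG $v(x) = \|v\|_\infty$ after replacing $v$ by a suitable competitor — this is where I would invoke that the solution $u_p$ of \eqref{updir} is, by Proposition \ref{prop0}, the minimizer and has $u_p(x) = \|u_p\|_\infty$, so $\inf\{\|\nabla v\|_p^p : v(x)=1\}$ is attained precisely at $u_p$ (by uniqueness of the weak solution and the Euler--Lagrange equation $-\Delta_p v = \lambda \delta_x$), and $u_p \in \mathcal{M}_p(x)$ forces the two infima to coincide. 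The main obstacle is thus purely bookkeeping: making rigorous that minimizing $\|\nabla v\|_p^p$ subject to the single point-constraint $v(x)=1$ yields the same value as minimizing over the more restrictive $\mathcal{M}_p(x)$, which follows because the unconstrained-at-sup minimizer $u_p$ happens to satisfy $u_p(x) = \|u_p\|_\infty = 1$ and hence is automatically in $\mathcal{M}_p(x)$. Once that is in place, $s_p(x) = (\mu_p(x))^{-1/p}$ is immediate, and I would write the whole thing in three or four lines citing Proposition \ref{prop0}.
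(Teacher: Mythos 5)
Your lower bound, $s_p(x)\ge |u_p(x)|/\|\nabla u_p\|_p=(\mu_p(x))^{-1/p}$, is correct and coincides with the paper's. The problem is the reverse inequality. After several abandoned normalizations you reduce it to the claim that $\nu_p(x):=\inf\{\|\nabla v\|_p^p: v\in W_0^{1,p}(\Omega),\ v(x)=1\}$ equals $\mu_p(x)$, and you justify the nontrivial direction $\nu_p(x)\ge\mu_p(x)$ by asserting that this infimum ``is attained precisely at $u_p$ (by uniqueness of the weak solution and the Euler--Lagrange equation $-\Delta_p v=\lambda\delta_x$).'' That step is circular as stated: a minimizer $w$ of the relaxed problem (whose existence you also do not establish, though a direct-method argument as in Proposition \ref{mumin} would give it) satisfies $-\Delta_p w=\nu_p(x)\delta_x$ with Lagrange multiplier $\nu_p(x)$, obtained by testing with $w$ itself; the uniqueness of weak solutions applies only for a \emph{fixed} right-hand side, so it identifies $w$ with $u_p$ only if one already knows $\nu_p(x)=\mu_p(x)$ --- which is exactly what is to be proved. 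Proposition \ref{prop0} says $u_p$ minimizes over $\mathcal{M}_p(x)$, not over the larger set $\{v(x)=1\}$, so it cannot be invoked for the relaxed problem without an extra argument.

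The gap is easy to close, in either of two ways. (i) Truncation: given $v$ with $v(x)=1$, the competitor $w:=\min\{v^{+},1\}=v^{+}-(v^{+}-1)^{+}$ lies in $W_0^{1,p}(\Omega)$, satisfies $w(x)=\|w\|_\infty=1$ (so $w\in\mathcal{M}_p(x)$) and $\|\nabla w\|_p\le\|\nabla v\|_p$, whence $\mu_p(x)\le\|\nabla v\|_p^p$; this is the ``suitable competitor'' you alluded to but never produced. (ii) The paper's own route, which bypasses $\nu_p(x)$ entirely: take an arbitrary $v\in W_0^{1,p}(\Omega)\setminus\{0\}$ as a test function in the weak equation (\ref{weakup}) satisfied by $u_p$ and apply H\"older's inequality, getting $\mu_p(x)|v(x)|\le\|\nabla u_p\|_p^{p-1}\|\nabla v\|_p=(\mu_p(x))^{1-\frac1p}\|\nabla v\|_p$, i.e.\ $|v(x)|/\|\nabla v\|_p\le(\mu_p(x))^{-1/p}$ directly, so $s_p(x)\le(\mu_p(x))^{-1/p}$. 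Either repair yields the corollary; as written, your argument does not.
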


\begin{proof}
Let $u_{p}\in\mathcal{M}_{p}(x)$ given by Proposition \ref{prop0} and take and
arbitrary $v\in W_{0}^{1,p}(\Omega)\setminus\left\{  0\right\}  .$ We have, by
H\"{o}lder inequality,%
\begin{align*}
\mu_{p}(x)\left\vert v(x)\right\vert  &  =\left\vert \mu_{p}(x)v(x)\right\vert
\\
&  =\left\vert \int_{\Omega}\left\vert \nabla u_{p}\right\vert ^{p-2}\nabla
u_{p}\cdot\nabla v\mathrm{d}y\right\vert \leq\left\Vert \nabla u_{p}%
\right\Vert _{p}^{p-1}\left\Vert \nabla v\right\Vert _{p}=(\mu_{p}%
(x))^{1-\frac{1}{p}}\left\Vert \nabla v\right\Vert _{p}.
\end{align*}
It follows that $(\mu_{p}(x))^{-1/p}\geq\left\vert v(x)\right\vert /\left\Vert
\nabla v\right\Vert _{p}.$ The arbitrariness of $v$ and (\ref{spx}) imply that
$(\mu_{p}(x))^{-1/p}\geq s_{p}(x).$

Recalling that $(\mu_{p}(x))^{-1/p}=\left\vert u_{p}(x)\right\vert /\left\Vert
\nabla u_{p}\right\Vert _{p}$ and that $\left\vert u_{p}(x)\right\vert
/\left\Vert \nabla u_{p}\right\Vert _{p}\leq s_{p}(x)$ we conclude that
$s_{p}(x)=(\mu_{p}(x))^{-1/p}.$
\end{proof}

\begin{corollary}
\label{Regularity}One has
\begin{equation}
\left\vert s_{p}(x)-s_{p}(y)\right\vert \leq C_{p,N}\left\vert x-y\right\vert
^{1-(N/p)},\quad\forall\,x,y\in\overline{\Omega}, \label{spHolder}%
\end{equation}
for every constant $C_{p,N}$ satisfying (\ref{MorreyRN}). Consequently,
$s_{p}\in C_{0}^{0,1-(N/p)}(\overline{\Omega}).$
\end{corollary}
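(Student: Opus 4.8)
The plan is to prove the Hölder estimate (\ref{spHolder}) directly from the pointwise Morrey-Sobolev inequality and the variational characterization obtained above, and then note that the Hölder bound together with the boundary values of $s_p$ gives membership in $C_0^{0,1-(N/p)}(\overline{\Omega})$. The boundary behavior is already settled: by (\ref{sharpness}) we have $0 < s_p(x) \le C_{p,N}(d_\Omega(x))^{1-(N/p)}$ for $x\in\Omega$ and $s_p \equiv 0$ on $\partial\Omega$, so $s_p$ vanishes continuously on $\partial\Omega$; hence the only real content is the uniform Hölder modulus of continuity on all of $\overline{\Omega}$.

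First I would fix $x,y\in\overline{\Omega}$ and, without loss of generality, assume $s_p(x)\ge s_p(y)$. The case where either point lies on $\partial\Omega$ should be handled (or absorbed) via (\ref{sharpness}): for instance if $y\in\partial\Omega$ then $|s_p(x)-s_p(y)| = s_p(x) \le C_{p,N}(d_\Omega(x))^{1-(N/p)} \le C_{p,N}|x-y|^{1-(N/p)}$, since $d_\Omega(x) = \inf_{z\in\partial\Omega}|x-z| \le |x-y|$. So assume now $x,y\in\Omega$. By Proposition \ref{prop0} there is $u_p\in\mathcal{M}_p(x)$, strictly positive in $\Omega$, with $u_p(x)=\|u_p\|_\infty = 1$ and $\mu_p(x) = \|\nabla u_p\|_p^p$; equivalently $s_p(x) = |u_p(x)|/\|\nabla u_p\|_p = 1/\|\nabla u_p\|_p$. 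Since $u_p\in W_0^{1,p}(\Omega)\subset W^{1,p}(\mathbb{R}^N)$ after extension by zero — more carefully, Morrey's inequality (\ref{MorreyRN}) applies to $W_0^{1,p}(\Omega)$ functions with the same constant $C_{p,N}$, as used to derive (\ref{sx}) — we get
\[
|u_p(x) - u_p(y)| \le C_{p,N}|x-y|^{1-(N/p)}\|\nabla u_p\|_p.
\]
Dividing by $\|\nabla u_p\|_p$ and using $u_p(x)/\|\nabla u_p\|_p = s_p(x)$ and $u_p(y)/\|\nabla u_p\|_p \le s_p(y)$ (this last inequality being exactly the defining property (\ref{spx}) of $s_p(y)$ applied to the admissible function $u_p$), we obtain
\[
s_p(x) - s_p(y) \le \frac{u_p(x) - u_p(y)}{\|\nabla u_p\|_p} \le C_{p,N}|x-y|^{1-(N/p)}.
\]
Combined with the symmetric role of $x$ and $y$ (swap their roles and use the extremal function at $y$), this yields (\ref{spHolder}).

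Finally, the conclusion $s_p\in C_0^{0,1-(N/p)}(\overline{\Omega})$ follows: (\ref{spHolder}) is precisely the statement that $s_p$ is $(1-(N/p))$-Hölder continuous on $\overline{\Omega}$, and $s_p = 0$ on $\partial\Omega$ by definition (\ref{spx}), so $s_p$ belongs to the subspace $C_0^{0,1-(N/p)}(\overline{\Omega})$ of Hölder functions vanishing on the boundary. The only mild subtlety — which I would state carefully rather than belabor — is the legitimacy of applying Morrey's inequality (\ref{MorreyRN}) to $u_p$: since $u_p\in W_0^{1,p}(\Omega)$, its zero extension $\widetilde{u_p}$ lies in $W^{1,p}(\mathbb{R}^N)$ with $\|\nabla\widetilde{u_p}\|_{L^p(\mathbb{R}^N)} = \|\nabla u_p\|_{L^p(\Omega)}$, so (\ref{MorreyRN}) gives the bound with the same $C_{p,N}$; for $x,y\in\overline{\Omega}$ the values $\widetilde{u_p}(x), \widetilde{u_p}(y)$ coincide with $u_p(x), u_p(y)$ (using the continuous representative and $u_p=0$ on $\partial\Omega$). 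I do not expect a genuine obstacle here; the argument is a short application of the already-established machinery, and the main point is simply to be clean about the boundary cases and the extension step.
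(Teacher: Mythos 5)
Your proposal is correct and follows essentially the same route as the paper: handle the boundary cases via (\ref{sharpness}), then for interior points apply Morrey's inequality (\ref{MorreyRN}) to the extremal function at $x$ (you use the normalized $u_p$ directly, the paper a positive scalar multiple of it, which is immaterial), use the defining inequality $u_p(y)\leq s_p(y)\left\Vert \nabla u_p\right\Vert _{p}$, divide by $\left\Vert \nabla u_p\right\Vert _{p}>0$, and conclude by symmetry in $x$ and $y$. Your extra remark about the zero extension of $u_p$ to $\mathbb{R}^{N}$ just makes explicit what the paper already uses implicitly in deriving (\ref{sx}).
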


\begin{proof}
Obviously, (\ref{spHolder}) implies that $s_{p}\in C_{0}^{0,1-(N/p)}%
(\overline{\Omega}).$ So, let us prove (\ref{spHolder}).

Let $x,y\in\overline{\Omega}.$ If $y\in\partial\Omega,$ then (\ref{sharpness})
yields%
\[
\left\vert s_{p}(x)-s_{p}(y)\right\vert =s_{p}(x)\leq C_{p,N}\left(
d_{\Omega}(x)\right)  ^{1-(N/p)}\leq C_{p,N}\left\vert x-y\right\vert
^{1-(N/p)}.
\]
Likewise, (\ref{spHolder}) holds if $x\in\partial\Omega.$

Now, we assume that $x,y\in\Omega.$ Let $u\in W_{0}^{1,p}(\Omega)$ be a
positive function such that
\[
u(x)=s_{p}(x)\left\Vert \nabla u\right\Vert _{p}%
\]
(take $u$ a positive multiple of the function $u_{p}\in\mathcal{M}_{p}(x)$
given by Proposition \ref{prop0}). As
\[
u(y)\leq s_{p}(y)\left\Vert \nabla u\right\Vert _{p}%
\]
we have, in view of (\ref{MorreyRN}),%
\[
(s_{p}(x)-s_{p}(y))\left\Vert \nabla u\right\Vert _{p}\leq u(x)-u(y)\leq
\left\vert u(x)-u(y)\right\vert \leq C_{p,N}\left\vert x-y\right\vert
^{1-(N/p)}\left\Vert \nabla u\right\Vert _{p}.
\]
As $\left\Vert \nabla u\right\Vert _{p}>0$ we get%
\[
s_{p}(x)-s_{p}(y)\leq C_{p,N}\left\vert x-y\right\vert ^{1-(N/p)}.
\]

Analogously, by taking a function $v\in W_{0}^{1,p}(\Omega)$ such that
$v(y)=s_{p}(y)\left\Vert \nabla v\right\Vert _{p}$ we arrive at the inequality%
\[
s_{p}(y)-s_{p}(x)\leq C_{p,N}\left\vert x-y\right\vert ^{1-(N/p)},
\]
completing thus the proof.
\end{proof}

We summarize the main results above in the following theorem.

\begin{theorem}
\label{Main1}Let $x\in\Omega$ be fixed and let $u_{p}\in W_{0}^{1,p}(\Omega)$
be the only weak solution of
\begin{equation}
\left\{
\begin{array}
[c]{rrll}%
-\Delta_{p}u & = & (s_{p}(x))^{-p}\delta_{x} & \mathrm{in}\,\Omega\\
u & = & 0 & \mathrm{on}\,\partial\Omega.
\end{array}
\right.  \label{edpsp}%
\end{equation}
Then, $u_{p}\in\mathcal{M}_{p}(x),$ is strictly positive in $\Omega,$ attains
its maximum value only at $x,$ and%
\[
s_{p}(x)=(\left\Vert \nabla u_{p}\right\Vert _{p})^{-1}.
\]
Moreover,

\begin{enumerate}
\item[(a)] $\left\vert v(x)\right\vert \leq s_{p}(x)\left\Vert \nabla
v\right\Vert _{p},\quad\forall\,(x,v)\in\overline{\Omega}\times W_{0}%
^{1,p}(\Omega);$

\item[(b)] $\left\vert v(x)\right\vert =s_{p}(x)\left\Vert \nabla v\right\Vert
_{p}$ if, and only if, $v$ is a scalar multiple of $u_{p};$

\item[(c)] $s_{p}\in C_{0}^{0,1-(N/p)}(\overline{\Omega}).$
\end{enumerate}
\end{theorem}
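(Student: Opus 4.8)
The plan is to assemble Theorem \ref{Main1} almost entirely from the propositions and corollaries already proved, since the theorem is essentially a summary statement. First I would observe that the Dirichlet problem (\ref{edpsp}) is exactly (\ref{updir}): by the preceding Corollary we have $\mu_p(x) = (s_p(x))^{-p}$, so the right-hand sides $\mu_p(x)\delta_x$ and $(s_p(x))^{-p}\delta_x$ coincide, and hence the unique weak solution $u_p$ furnished by Proposition \ref{prop0} for (\ref{updir}) is the same function solving (\ref{edpsp}). This immediately transports all the structural properties established in Proposition \ref{prop0}: $u_p \in \mathcal{M}_p(x)$, $u_p > 0$ in $\Omega$, $u_p$ attains its maximum value only at $x$, and $\mu_p(x) = \left\Vert \nabla u_p \right\Vert_p^p$. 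Combining the last identity with $\mu_p(x) = (s_p(x))^{-p}$ gives $(s_p(x))^{-p} = \left\Vert \nabla u_p \right\Vert_p^p$, i.e. $s_p(x) = (\left\Vert \nabla u_p \right\Vert_p)^{-1}$, which is the displayed formula.

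Next I would dispatch the three enumerated items. Item (a) is precisely inequality (\ref{sMS}) with $s = s_p$; for $x \in \Omega$ it follows from the definition (\ref{spx}) of $s_p$ as a supremum (so $\left\vert v(x)\right\vert / \left\Vert \nabla v\right\Vert_p \le s_p(x)$ for all $v \ne 0$, and trivially for $v = 0$), and for $x \in \partial\Omega$ it is the trivial inequality $|v(x)| = 0 \le 0$ since $v \in W_0^{1,p}(\Omega)$ vanishes on the boundary. Item (c) is exactly the content of Corollary \ref{Regularity}. For item (b), the ``if'' direction is immediate: if $v = c\, u_p$ then $|v(x)| = |c|\, u_p(x) = |c|$ (as $u_p(x) = 1$) and $\left\Vert \nabla v \right\Vert_p = |c|\, \left\Vert \nabla u_p \right\Vert_p = |c|\, s_p(x)^{-1}$, so $|v(x)| = s_p(x) \left\Vert \nabla v \right\Vert_p$.

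The only part requiring a genuine argument is the ``only if'' direction of (b): if $v \in W_0^{1,p}(\Omega) \setminus \{0\}$ satisfies $|v(x)| = s_p(x) \left\Vert \nabla v \right\Vert_p$, I must show $v$ is a scalar multiple of $u_p$. Here I would normalize: after replacing $v$ by $v / \left\Vert v \right\Vert_\infty$ (note $\left\Vert v\right\Vert_\infty \ge |v(x)| = s_p(x)\left\Vert \nabla v\right\Vert_p > 0$) the equality can only be preserved if $\left\Vert v\right\Vert_\infty = |v(x)|$, which forces $|v(x)| = \left\Vert v\right\Vert_\infty$; after further multiplying by a sign we get $|v| \in \mathcal{M}_p(x)$ with $\left\Vert \nabla |v| \right\Vert_p = \left\Vert \nabla v \right\Vert_p = s_p(x)^{-1}$, hence $\left\Vert \nabla |v|\right\Vert_p^p = s_p(x)^{-p} = \mu_p(x)$, so $|v|$ is a minimizer in (\ref{lampS1}). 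But the final paragraph of the proof of Proposition \ref{prop0} shows precisely that any such minimizer equals $u_p$ (via the equality case of Hölder's inequality in (\ref{prop0c})), so $|v| = u_p$ and therefore the original $v$ is a scalar multiple of $u_p$. I expect this normalization-and-reduction to the uniqueness statement already buried in Proposition \ref{prop0} to be the main (and only mild) obstacle; everything else is bookkeeping. I would close by noting that, by the bijectivity of the duality map already invoked, the weak solution $u_p$ of (\ref{edpsp}) is unique, which justifies the definite article in the statement.
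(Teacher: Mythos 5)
Your overall assembly is exactly what the paper does: Theorem \ref{Main1} is stated there as a summary, so identifying (\ref{edpsp}) with (\ref{updir}) through $\mu_p(x)=(s_p(x))^{-p}$, importing the structural facts from Proposition \ref{prop0}, reading (a) off the definition (\ref{spx}), and quoting Corollary \ref{Regularity} for (c) is the intended route. The one place where you do real work, however — the ``only if'' direction of (b) — contains a step that is not an argument. You claim that after replacing $v$ by $v/\left\Vert v\right\Vert_\infty$ ``the equality can only be preserved if $\left\Vert v\right\Vert_\infty=|v(x)|$.'' But both sides of $|v(x)|=s_p(x)\left\Vert \nabla v\right\Vert_p$ are positively homogeneous of degree one in $v$, so the equality is preserved under \emph{every} rescaling; nothing in that sentence excludes $\left\Vert v\right\Vert_\infty>|v(x)|$, and this is precisely what you must rule out before you can place the normalized $|v|$ in $\mathcal{M}_p(x)$ and invoke the uniqueness of minimizers of (\ref{lampS1}). (The fact $|v(x)|=\left\Vert v\right\Vert_\infty$ is true for extremals, but it is a consequence of extremality, not of scaling.)

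The gap is easy to close, in either of two ways. (i) Truncation: normalize $|v(x)|=1$, set $w=\min\{|v|,1\}\in W_0^{1,p}(\Omega)$; then $w\in\mathcal{M}_p(x)$ and $\mu_p(x)\le\left\Vert \nabla w\right\Vert_p^p\le\left\Vert \nabla v\right\Vert_p^p=\mu_p(x)$, forcing $\nabla v=0$ a.e.\ on $\{|v|\ge 1\}$, hence $(|v|-1)^+\equiv 0$ and $\left\Vert v\right\Vert_\infty=|v(x)|$. (ii) Bypass the sup-norm condition altogether: with $|v(x)|=1$ and $\left\Vert \nabla v\right\Vert_p^p=\mu_p(x)$, take $\phi=|v|$ in (\ref{weakup}); the chain (\ref{prop0c}) then becomes a chain of equalities, the equality case of H\"older gives $\nabla|v|=\lambda\nabla u_p$ a.e., hence $|v|=\lambda u_p$ and $\lambda=1$ by evaluating at $x$; since $u_p>0$ in $\Omega$ and $v$ is continuous, $v=\pm u_p$. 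Note that this argument — which is exactly the final paragraph of the proof of Proposition \ref{prop0} — never uses $\left\Vert v\right\Vert_\infty=1$, so normalizing by $|v(x)|$ instead of $\left\Vert v\right\Vert_\infty$ makes your reduction work verbatim. With that repair the proposal is correct and coincides with the paper's treatment.
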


\begin{proposition}
\label{corol}Let $x\in\Omega,$ $u\in W_{0}^{1,p}(\Omega)$ and $\mu>0$ be such
that
\[
\left\{
\begin{array}
[c]{rrll}%
-\Delta_{p}u & = & \mu\delta_{x} & \mathrm{in}\,\Omega\\
u & = & 0 & \mathrm{on}\,\partial\Omega.
\end{array}
\right.
\]
If $u(x)=1$ then,
\[
u=u_{p}\quad\mathrm{and}\quad\mu=(s_{p}(x))^{-p},
\]
where $u_{p}$ denotes the only solution of (\ref{edpsp}).
\end{proposition}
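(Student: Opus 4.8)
The plan is to re-run the argument of Proposition \ref{prop0}, starting this time from the normalization $u(x)=1$ rather than from the defining property of $\mu_p(x)$.

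First I would write the problem in weak form: $u\in W_0^{1,p}(\Omega)$ satisfies
\[
\int_\Omega\left\vert \nabla u\right\vert ^{p-2}\nabla u\cdot\nabla\phi\,\mathrm{d}y=\mu\,\phi(x),\qquad\forall\,\phi\in W_0^{1,p}(\Omega).
\]
Taking $\phi=u$ and using $u(x)=1$ gives $\left\Vert \nabla u\right\Vert _p^p=\mu>0$, so $u\not\equiv0$. Testing with nonnegative $\phi$ and invoking the weak comparison principle yields $u\geq0$ in $\Omega$; testing with functions supported in $\Omega\setminus\{x\}$ shows that $u$ is $p$-harmonic there, so, exactly as in Proposition \ref{prop0} (using \cite{Lq}), the extrema of $u$ over $\overline{\Omega\setminus\{x\}}$ are attained on $\partial\Omega\cup\{x\}$. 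Since $u=0$ on $\partial\Omega$ and $u(x)=1$, this forces $0<u(y)<1$ for all $y\in\Omega\setminus\{x\}$, hence $\left\Vert u\right\Vert _\infty=u(x)=1$ and $u\in\mathcal{M}_p(x)$.

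Next I would identify $\mu$ with $\mu_p(x)$. On one hand, $u\in\mathcal{M}_p(x)$ together with the definition (\ref{lampS1}) gives $\mu_p(x)\leq\left\Vert \nabla u\right\Vert _p^p=\mu$. On the other hand, taking $\phi=u_p$ in the weak formulation and recalling that $u_p(x)=1$ and $\left\Vert \nabla u_p\right\Vert _p^p=\mu_p(x)$ by Proposition \ref{prop0}, Hölder's inequality yields
\[
\mu=\mu\,u_p(x)=\int_\Omega\left\vert \nabla u\right\vert ^{p-2}\nabla u\cdot\nabla u_p\,\mathrm{d}y\leq\left\Vert \nabla u\right\Vert _p^{p-1}\left\Vert \nabla u_p\right\Vert _p=\mu^{\frac{p-1}{p}}(\mu_p(x))^{\frac{1}{p}},
\]
so $\mu\leq\mu_p(x)$. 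Therefore $\mu=\mu_p(x)=(s_p(x))^{-p}$, the last equality being the identity $s_p(x)=(\mu_p(x))^{-1/p}$ established above.

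Finally, $\left\Vert \nabla u\right\Vert _p=\mu^{1/p}=(s_p(x))^{-1}$, so $\left\vert u(x)\right\vert =u(x)=1=s_p(x)\left\Vert \nabla u\right\Vert _p$; by the equality case in Theorem \ref{Main1}(b), $u$ is a scalar multiple of $u_p$, and evaluating at $x$ (where both equal $1$) gives $u=u_p$. Equivalently, $u$ is a minimizer realizing $\mu_p(x)$ over $\mathcal{M}_p(x)$, and the last paragraph of the proof of Proposition \ref{prop0} shows any such minimizer equals $\pm u_p$, positivity of $u$ selecting $u_p$. I do not expect a genuine obstacle here: the only delicate points—nonnegativity of $u$ and the location of its maximum—are handled verbatim by the comparison and maximum principles already used in Proposition \ref{prop0}, and the remainder is the same Hölder/duality bookkeeping that appears there.
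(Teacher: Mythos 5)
Your proof is correct, but it is more roundabout than the paper's. You share the key computation (testing the weak formulation of $-\Delta_p u=\mu\delta_x$ with $\phi=u_p$ and applying H\"older to get $\mu\leq \mu^{\frac{p-1}{p}}(\mu_p(x))^{1/p}$), but you close the argument differently. You first re-run the machinery of Proposition \ref{prop0} (comparison principle, $p$-harmonicity in $\Omega\setminus\{x\}$, maximum principle) to show $u\in\mathcal{M}_p(x)$, and then use the variational definition (\ref{lampS1}) for the reverse bound $\mu_p(x)\leq\|\nabla u\|_p^p=\mu$; finally you identify $u=u_p$ through the equality case in Theorem \ref{Main1}(b) (or through the uniqueness of minimizers from Proposition \ref{prop0}). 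The paper dispenses with all of the maximum-principle work: it gets the reverse bound directly from the already established pointwise inequality $1=u(x)\leq s_p(x)\|\nabla u\|_p=s_p(x)\mu^{1/p}$, which together with $\mu^{1/p}\leq(s_p(x))^{-1}$ forces $\mu=(s_p(x))^{-p}$, and then concludes $u=u_p$ simply because $u$ and $u_p$ solve the same Dirichlet problem, whose weak solution is unique by the bijectivity of the duality mapping. What your route buys is the extra (but here unneeded) information that $u\in\mathcal{M}_p(x)$ and is a minimizer; what the paper's route buys is brevity and independence from the maximum principle, relying only on H\"older, the defining property of $s_p$, and uniqueness for (\ref{edpsp}).
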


\begin{proof}
Since $\left\Vert \nabla u\right\Vert _{p}^{p}=\mu$ and $s_{p}(x)=(\left\Vert
\nabla u_{p}\right\Vert _{p})^{-1},$ H\"{o}lder's inequality yields
\[
\mu=\mu u_{p}(x)=\int_{\Omega}\left\vert \nabla u\right\vert ^{p-2}\nabla
u\cdot\nabla u_{p}\mathrm{d}y\leq\left\Vert \nabla u\right\Vert _{p}%
^{p-1}\left\Vert \nabla u_{p}\right\Vert _{p}=\mu^{\frac{p-1}{p}}%
(s_{p}(x))^{-1},
\]
so that $~\mu^{1/p}\leq(s_{p}(x))^{-1}.$ Using this and recalling that
$u(x)=1$ we have%
\[
1=u(x)\leq s_{p}(x)\left\Vert \nabla u\right\Vert _{p}=s_{p}(x)\mu^{1/p}%
\leq1.
\]

Therefore, $\mu=(s_{p}(x))^{-p}$ and, by uniqueness, $u=u_{p}.$
\end{proof}

\begin{remark}
\label{Green}For each $x\in\Omega$ let $G_{p}(\cdot;x)$ denote the Green
function of the $p$-Laplacian in $\Omega$ with pole at $x.$ That is,
$G_{p}(\cdot;x)$ is the (only) solution of
\[
\left\{
\begin{array}
[c]{rrll}%
-\Delta_{p}u & = & \delta_{x} & \mathrm{in}\,\Omega\\
u & = & 0 & \mathrm{on}\,\partial\Omega,
\end{array}
\right.
\]
where $\delta_{x}$ denotes the Dirac delta distribution supported at $x.$
Since $G_{p}(x;x)/G_{p}(x;x)=1$ and
\[
-\Delta_{p}(G_{p}(\cdot;x)/G_{p}(x;x))=G_{p}(x;x))^{1-p}\delta_{x}%
\]
an immediate consequence of Proposition \ref{corol} is that
\[
s_{p}(x)=(G_{p}(x;x))^{\frac{p-1}{p}}\quad\mathrm{and}\quad u_{p}%
(y)=\frac{G_{p}(y;x)}{G_{p}(x;x)},\quad\forall\,y\in\Omega.
\]

\end{remark}

In the unidimensional case, $s_{p}$ and $u_{p}$ are given by explicit
expressions, as the following corollary shows.

\begin{corollary}
\label{1d}Let $p>N=1$ and $\Omega=(a,b).$ For each $x\in(a,b)$ one has
\begin{equation}
s_{p}(x)=\left(  (x-a)^{1-p}+(b-x)^{1-p}\right)  ^{-1/p} \label{spunid}%
\end{equation}
and%
\begin{equation}
u_{p}(y):=\left\{
\begin{array}
[c]{ccc}%
(y-a)(x-a)^{-1} & \mathrm{if} & a\leq y\leq x\\
(b-y)(b-x)^{-1} & \mathrm{if} & x\leq y\leq b.
\end{array}
\right.  \label{un=1}%
\end{equation}

\end{corollary}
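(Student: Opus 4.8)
The plan is to verify directly that the function $u_p$ given by (\ref{un=1}) solves the one-dimensional version of (\ref{edpsp}) with the claimed Dirac mass, and then invoke Proposition \ref{corol} to identify both $u_p$ and $s_p(x)$. This is the cleanest route because in dimension one the $p$-Laplacian is just $-\left(\left\vert u'\right\vert^{p-2}u'\right)'$, and the candidate $u_p$ is piecewise affine, so $\left\vert u_p'\right\vert^{p-2}u_p'$ is piecewise constant and its distributional derivative is an explicit point mass at $x$.

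First I would record that the function defined by (\ref{un=1}) is continuous on $[a,b]$, vanishes at the endpoints, equals $1$ at $x$, and lies in $W_0^{1,p}(a,b)$ (it is Lipschitz). On $(a,x)$ it has constant derivative $u_p' = (x-a)^{-1}>0$, and on $(x,b)$ it has constant derivative $u_p' = -(b-x)^{-1}<0$. Hence $\left\vert u_p'\right\vert^{p-2}u_p'$ equals $(x-a)^{1-p}$ on $(a,x)$ and $-(b-x)^{1-p}$ on $(x,b)$. For any test function $\phi\in W_0^{1,p}(a,b)$, integrating by parts on each subinterval gives
\[
\int_a^b \left\vert u_p'\right\vert^{p-2}u_p'\,\phi'\,\mathrm{d}y
= (x-a)^{1-p}\int_a^x \phi'\,\mathrm{d}y - (b-x)^{1-p}\int_x^b \phi'\,\mathrm{d}y
= \Bigl((x-a)^{1-p}+(b-x)^{1-p}\Bigr)\phi(x),
\]
where the boundary terms at $a$ and $b$ vanish since $\phi(a)=\phi(b)=0$. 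This is exactly the weak formulation $-\Delta_p u_p = \mu\,\delta_x$ with $\mu = (x-a)^{1-p}+(b-x)^{1-p}$.

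Since $u_p(x)=1$ and $\mu>0$, Proposition \ref{corol} applies and yields $\mu = (s_p(x))^{-p}$, i.e.
\[
s_p(x) = \mu^{-1/p} = \Bigl((x-a)^{1-p}+(b-x)^{1-p}\Bigr)^{-1/p},
\]
which is (\ref{spunid}), and simultaneously identifies the function in (\ref{un=1}) as the solution $u_p$ of (\ref{edpsp}). I do not anticipate a genuine obstacle here; the only point requiring a small amount of care is the integration-by-parts step splitting the integral at $x$ and checking that the interior boundary contributions combine correctly into the coefficient of $\phi(x)$ — once that bookkeeping is done, everything else is an immediate citation of Proposition \ref{corol}. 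Alternatively, one could bypass Proposition \ref{corol} and instead recall from Remark \ref{Green} that $s_p(x)=(G_p(x;x))^{(p-1)/p}$ together with the classical formula for the one-dimensional $p$-Laplacian Green function, but the direct verification above is self-contained and shorter.
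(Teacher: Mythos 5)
Your proof is correct and is essentially the paper's own argument: verify that the piecewise affine candidate satisfies the weak formulation $-\Delta_p u = \mu\,\delta_x$ with $\mu=(x-a)^{1-p}+(b-x)^{1-p}$ by computing $\int_a^b\left\vert u'\right\vert^{p-2}u'\phi'\,\mathrm{d}y=\mu\,\phi(x)$, and then apply Proposition \ref{corol}. The only cosmetic difference is that the paper evaluates $\int_a^x\phi'\,\mathrm{d}y$ and $\int_x^b\phi'\,\mathrm{d}y$ directly rather than phrasing the step as integration by parts.
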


\begin{proof}
Let $\mu$ be the right-hand side of (\ref{spunid}) raised to $-p,$ that is,
\[
\mu=\left(  (x-a)^{1-p}+(b-x)^{1-p}\right)  .
\]
Let $u$ be expressed by the right-hand side of (\ref{un=1}). Clearly, $\mu>0$
and $u(x)=1.$

For $\phi\in W_{0}^{1,p}((a,b))$ given, we have%
\begin{align*}
\int_{a}^{b}\left\vert u^{\prime}\right\vert ^{p-2}u^{\prime}\phi^{\prime
}\mathrm{d}y  &  =\int_{a}^{x}(x-a)^{1-p}\phi^{\prime}\mathrm{d}y-\int_{x}%
^{b}(b-x)^{1-p}\phi^{\prime}\mathrm{d}y\\
&  =(x-a)^{1-p}\int_{a}^{x}\phi^{\prime}\mathrm{d}y-(b-x)^{1-p}\int_{x}%
^{b}\phi^{\prime}\mathrm{d}y\\
&  =\left(  (x-a)^{1-p}+(b-x)^{1-p}\right)  \phi(x)=\mu\phi(x).
\end{align*}
Thus, according to Proposition \ref{corol}, $\mu=(s_{p}(x))^{-p}$ and
$u=u_{p}.$
\end{proof}

(Note that $s_{p}$ is symmetric with respect to $\overline{x}:=(a+b)/2$ . It
is also simple to check that $s_{p}$ is concave.)

We end this section with some remarks on the case where $\Omega=B_{R}(0),$ the
$N$-dimensional ball ($N\geq2$) centered at the origin with radius $R.$ In
this case, the function $x\mapsto s_{p}(x)$ is radially symmetric:
$s_{p}(x)=s_{p}(y)$ whenever $\left\vert x\right\vert =\left\vert y\right\vert
.$ Indeed, by using an orthogonal change of variable one can see that the only
positive maximizer of $s_{p}(x)$ in $\mathcal{M}_{p}(x)$ is a rotation of the
only positive maximizer of $s_{p}(y)$ in $\mathcal{M}_{p}(y).$ Note that the
function $u_{p}$ corresponding to $x\not =0$ cannot be radial since $x$ is its
unique maximum point.

On the other hand, $u_{p}$ is radial when $x=0.$ By the way,%
\[
s_{p}(0)=\frac{R^{1-(N/p)}}{\sqrt[p]{N\omega_{N}}}\left(  \frac{p-1}%
{p-N}\right)  ^{\frac{p-1}{p}}\quad\mathrm{and}\quad u_{p}(y)=1-\left(
\left\vert y\right\vert /R\right)  ^{\frac{p-N}{p-1}}\quad\forall\,y\in
B_{R}(0),
\]
since the best constant in the Morrey-Sobolev inequality (\ref{MS}) for
$B_{R}(0)$ is given by the first expression above and its corresponding
extremal functions are scalar multiples of the function given by the second
expression above (see \cite{EP, Lq}).

As consequence of Theorem \ref{convex1} (Section \ref{Sec4}), the function
$s_{p}$ is concave and assumes its maximum value uniquely at $0.$ Hence, we
conclude that $s_{p}$ is radially decreasing.

Even knowing these properties of $s_{p}$ it seems to be very difficult to
compute this function explicitly (at $x\not =0$). Note that the Green function
$G_{p}$ of the $p$-Laplacian for a ball is not known if $p>2.$

\section{Asymptotics as $p\rightarrow\infty$\label{Sec3}}

An immediate lower bound to the function $s_{p}$ comes from its definition
(\ref{spx}), by taking $v=d_{\Omega}$ (and recalling that $\left\vert \nabla
d_{\Omega}\right\vert =1$ in $\Omega$):%
\begin{equation}
d_{\Omega}(x)\left\vert \Omega\right\vert ^{-\frac{1}{p}}\leq s_{p}%
(x),\quad\forall\,x\in\overline{\Omega}. \label{lowsp}%
\end{equation}

\begin{lemma}
For each fixed $x\in\overline{\Omega},$ the function $p\mapsto s_{p}%
(x)\left\vert \Omega\right\vert ^{\frac{1}{p}}$ is nonincreasing and%
\begin{equation}
d_{\Omega}(x)\leq s(x):=\lim_{p\rightarrow\infty}s_{p}(x)=\inf_{q>N}%
s_{q}(x)\left\vert \Omega\right\vert ^{\frac{1}{q}}. \label{est+}%
\end{equation}

\end{lemma}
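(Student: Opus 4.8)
The plan is to establish the monotonicity of $p\mapsto s_{p}(x)\left\vert \Omega\right\vert^{1/p}$ via a Hölder-interpolation argument on the gradients of the optimal functions, and then read off the limit. First I would fix $x\in\Omega$ (the boundary case $x\in\partial\Omega$ is trivial since all quantities vanish there) and take $N<q<r$. Let $u_{r}\in\mathcal{M}_{r}(x)$ be the optimal function for exponent $r$ given by Theorem \ref{Main1}, so that $u_{r}(x)=\left\Vert u_{r}\right\Vert_{\infty}=1$ and $s_{r}(x)=(\left\Vert\nabla u_{r}\right\Vert_{r})^{-1}$. Since $u_{r}\in W_{0}^{1,r}(\Omega)\subset W_{0}^{1,q}(\Omega)$ (because $\Omega$ is bounded and $q<r$), it is an admissible competitor for $s_{q}(x)$, whence
\[
s_{q}(x)\geq\frac{\left\vert u_{r}(x)\right\vert}{\left\Vert\nabla u_{r}\right\Vert_{q}}=\frac{1}{\left\Vert\nabla u_{r}\right\Vert_{q}}.
\]
By Hölder's inequality with exponents $r/q$ and $r/(r-q)$ applied to $\left\vert\nabla u_{r}\right\vert^{q}\cdot 1$,
\[
\left\Vert\nabla u_{r}\right\Vert_{q}\leq\left\Vert\nabla u_{r}\right\Vert_{r}\left\vert\Omega\right\vert^{\frac{1}{q}-\frac{1}{r}},
\]
so combining the two displays gives
\[
s_{q}(x)\left\vert\Omega\right\vert^{\frac{1}{q}}\geq\frac{\left\vert\Omega\right\vert^{\frac{1}{q}}}{\left\Vert\nabla u_{r}\right\Vert_{r}\left\vert\Omega\right\vert^{\frac{1}{q}-\frac{1}{r}}}=\frac{\left\vert\Omega\right\vert^{\frac{1}{r}}}{\left\Vert\nabla u_{r}\right\Vert_{r}}=s_{r}(x)\left\vert\Omega\right\vert^{\frac{1}{r}},
\]
which is exactly the asserted monotonicity.

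Having the nonincreasing property, the map $p\mapsto s_{p}(x)\left\vert\Omega\right\vert^{1/p}$ has a limit as $p\to\infty$ equal to its infimum over $q>N$; call it $s(x)$. Since $\left\vert\Omega\right\vert^{1/p}\to1$, this limit coincides with $\lim_{p\to\infty}s_{p}(x)$, establishing the chain of equalities in \eqref{est+}. Finally, the lower bound $d_{\Omega}(x)\leq s(x)$ follows by passing to the limit in \eqref{lowsp}: for every $p$ we have $d_{\Omega}(x)\left\vert\Omega\right\vert^{-1/p}\leq s_{p}(x)$, equivalently $d_{\Omega}(x)\leq s_{p}(x)\left\vert\Omega\right\vert^{1/p}$, and letting $p\to\infty$ (using the monotone convergence just proved, or simply that the right side converges to $s(x)$) yields $d_{\Omega}(x)\leq s(x)$.

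I do not anticipate a genuine obstacle here; the only point requiring a little care is making sure the competitor $u_{r}$ is legitimately in $W_{0}^{1,q}(\Omega)$ and that the Hölder exponents are paired correctly, but both are routine for a bounded domain with $q<r$. One should also note the edge case where $d_{\Omega}(x)=0$ (i.e. $x\in\partial\Omega$), for which every inequality in \eqref{est+} is an equality with both sides zero, so the statement holds trivially there and the substance of the lemma is entirely in the interior.
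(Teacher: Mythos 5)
Your proof is correct and follows essentially the same route as the paper: the paper takes the minimizer for the larger exponent as a competitor for the smaller one (via the inclusion $\mathcal{M}_{p_2}(x)\subset\mathcal{M}_{p_1}(x)$ and the $\mu_p$ formulation) and applies H\"older's inequality on the bounded domain, which is exactly your interpolation step phrased through $s_p$ instead of $\mu_p$. The limit identification and the passage to the limit in \eqref{lowsp} are likewise identical.
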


\begin{proof}
Let $N<p_{1}<p_{2}.$ For $i\in\left\{  1,2\right\}  $ let $u_{i}\in
\mathcal{M}_{p_{i}}(x)$ be such that%
\[
\mu_{p_{i}}(x)=\left\Vert \nabla u_{i}\right\Vert _{p_{i}}^{p_{i}}.
\]
Since $\mathcal{M}_{p_{2}}(x)\subset\mathcal{M}_{p_{1}}(x)$ we obtain, by
H\"{o}lder's inequality,
\[
\mu_{p_{1}}(x)\leq\left\Vert \nabla u_{2}\right\Vert _{p_{1}}^{p_{1}}%
\leq\left\Vert \nabla u_{2}\right\Vert _{p_{2}}^{p_{1}}\left\vert
\Omega\right\vert ^{1-\frac{p_{1}}{p_{2}}}=\left(  \mu_{p_{2}}(x)\right)
^{p_{1}/p_{2}}\left\vert \Omega\right\vert ^{1-\frac{p_{1}}{p_{2}}}.
\]
This means that
\[
s_{p_{2}}(x)\left\vert \Omega\right\vert ^{1/p_{2}}=\left(  \mu_{p_{2}%
}(x)\right)  ^{-1/p_{2}}\left\vert \Omega\right\vert ^{1/p_{2}}\leq\left(
\mu_{p_{1}}(x)\right)  ^{-1/p_{1}}\left\vert \Omega\right\vert ^{1/p_{1}%
}=s_{p_{1}}(x)\left\vert \Omega\right\vert ^{1/p_{1}}.
\]

It follows that
\[
\lim_{p\rightarrow\infty}s_{p}(x)\left\vert \Omega\right\vert ^{1/p}%
=\inf_{q>N}s_{q}(x)\left\vert \Omega\right\vert ^{1/q}.
\]
Hence, since $s_{p}(x)=(s_{p}(x)\left\vert \Omega\right\vert ^{1/p})\left\vert
\Omega\right\vert ^{-1/p},$ the limit $s(x)$ in (\ref{est+}) exists and
coincides with the above limit. The first inequality in (\ref{est+}) then
follows by letting $p\rightarrow\infty$ in (\ref{lowsp}).
\end{proof}

The next result shows that the inequality in (\ref{est+}) is, in fact, an equality.

\begin{proposition}
\label{uinf}Let $x\in\Omega$ be fixed and, for each $p>N,$ let $u_{p}%
\in\mathcal{M}_{p}(x)$ be the positive function such that $s_{p}%
(x)=(\left\Vert \nabla u_{p}\right\Vert _{p})^{-1}.$ We claim that every
sequence $\left\{  u_{p_{n}}\right\}  _{n\in\mathbb{N}},$ with $p_{n}%
\rightarrow\infty,$ admits a subsequence $\left\{  u_{p_{n_{j}}}\right\}
_{j\in\mathbb{N}}$ converging uniformly to a nonnegative function $u_{\infty
}\in W^{1,\infty}(\Omega)\cap C_{0}(\overline{\Omega})$ such that%
\[
u_{\infty}(x)=\left\Vert u_{\infty}\right\Vert _{\infty}=1\quad\mathrm{and}%
\quad\left\Vert \nabla u_{\infty}\right\Vert _{\infty}=(d_{\Omega}(x))^{-1}.
\]
Moreover,%
\begin{equation}
\lim_{p\rightarrow\infty}s_{p}(x)=d_{\Omega}(x),\quad\forall\,x\in
\overline{\Omega}. \label{aux3}%
\end{equation}

\end{proposition}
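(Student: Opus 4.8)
The plan is to combine the uniform $L^{q}$-gradient bounds that are implicit in the preceding Lemma with a compactness argument, and then to pin down the limit by an elementary one-dimensional estimate along the segment joining $x$ to its nearest boundary point. Fix $x\in\Omega$. Since $\left\Vert \nabla u_{p}\right\Vert _{p}=(s_{p}(x))^{-1}$ and, by \eqref{lowsp}, $s_{p}(x)\geq d_{\Omega}(x)\left\vert \Omega\right\vert ^{-1/p}$, H\"{o}lder's inequality gives for every $N<q<p$
\[
\left\Vert \nabla u_{p}\right\Vert _{q}\leq\left\vert \Omega\right\vert ^{\frac{1}{q}-\frac{1}{p}}\left\Vert \nabla u_{p}\right\Vert _{p}=\left\vert \Omega\right\vert ^{\frac{1}{q}-\frac{1}{p}}(s_{p}(x))^{-1}\leq\frac{\left\vert \Omega\right\vert ^{1/q}}{d_{\Omega}(x)},
\]
a bound independent of $p$; together with $\left\Vert u_{p}\right\Vert _{\infty}=1$, this shows that $\{u_{p}:p>q\}$ is bounded in $W_{0}^{1,q}(\Omega)$ for each fixed $q>N$.

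Now let $p_{n}\rightarrow\infty$. By a diagonal extraction over a sequence $q_{k}\uparrow\infty$, using reflexivity of $W_{0}^{1,q_{k}}(\Omega)$ and its compact embedding into $C(\overline{\Omega})$, I extract a subsequence $\{u_{p_{n_{j}}}\}$ converging uniformly on $\overline{\Omega}$ and weakly in $W_{0}^{1,q}(\Omega)$ for every $q>N$, to a common limit $u_{\infty}$. Uniform convergence yields $u_{\infty}\geq0$, $u_{\infty}=0$ on $\partial\Omega$ (so $u_{\infty}\in C_{0}(\overline{\Omega})$), $u_{\infty}(x)=\lim_{j}u_{p_{n_{j}}}(x)=1$ and $\left\Vert u_{\infty}\right\Vert _{\infty}=\lim_{j}\left\Vert u_{p_{n_{j}}}\right\Vert _{\infty}=1$. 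By weak lower semicontinuity of the $L^{q}$-norm and the uniform bound above, $\left\Vert \nabla u_{\infty}\right\Vert _{q}\leq\liminf_{j}\left\Vert \nabla u_{p_{n_{j}}}\right\Vert _{q}\leq\left\vert \Omega\right\vert ^{1/q}/d_{\Omega}(x)$ for every $q>N$; letting $q\rightarrow\infty$ shows $u_{\infty}\in W^{1,\infty}(\Omega)$ with $\left\Vert \nabla u_{\infty}\right\Vert _{\infty}\leq(d_{\Omega}(x))^{-1}$.

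For the matching lower bound, choose $y^{\ast}\in\partial\Omega$ with $\left\vert x-y^{\ast}\right\vert =d_{\Omega}(x)$; the segment from $y^{\ast}$ to $x$, minus its endpoint $y^{\ast}$, lies in the inscribed ball $B_{d_{\Omega}(x)}(x)\subset\Omega$, a convex set on which the $W^{1,\infty}$ function $u_{\infty}$ is Lipschitz with constant $\leq\left\Vert \nabla u_{\infty}\right\Vert _{\infty}$. Passing to the closure and using $u_{\infty}(y^{\ast})=0$, $u_{\infty}(x)=1$,
\[
1=u_{\infty}(x)-u_{\infty}(y^{\ast})\leq\left\Vert \nabla u_{\infty}\right\Vert _{\infty}\left\vert x-y^{\ast}\right\vert =\left\Vert \nabla u_{\infty}\right\Vert _{\infty}d_{\Omega}(x),
\]
so $\left\Vert \nabla u_{\infty}\right\Vert _{\infty}=(d_{\Omega}(x))^{-1}$, as claimed. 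Finally, \eqref{est+} already provides the existence of $s(x):=\lim_{p}s_{p}(x)\geq d_{\Omega}(x)$, and \eqref{aux3} is trivial for $x\in\partial\Omega$ (both sides vanish). For $x\in\Omega$, the reverse H\"{o}lder bound $\left\Vert \nabla u_{p}\right\Vert _{p}\geq\left\vert \Omega\right\vert ^{\frac{1}{p}-\frac{1}{q}}\left\Vert \nabla u_{p}\right\Vert _{q}$ (valid for $p>q$), evaluated along the subsequence and combined with weak lower semicontinuity, gives $s(x)^{-1}\geq\left\vert \Omega\right\vert ^{-1/q}\left\Vert \nabla u_{\infty}\right\Vert _{q}$; letting $q\rightarrow\infty$ yields $s(x)^{-1}\geq\left\Vert \nabla u_{\infty}\right\Vert _{\infty}=(d_{\Omega}(x))^{-1}$, i.e. $s(x)\leq d_{\Omega}(x)$. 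Hence $s(x)=d_{\Omega}(x)$.

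The step I expect to be the main obstacle is the reverse gradient estimate $\left\Vert \nabla u_{\infty}\right\Vert _{\infty}\geq(d_{\Omega}(x))^{-1}$: one must justify evaluating $u_{\infty}$ along the segment to the nearest boundary point and controlling the increment by the $L^{\infty}$-norm of the gradient. This is legitimate precisely because that segment sits inside the convex inscribed ball $B_{d_{\Omega}(x)}(x)$, on which a Sobolev function with bounded gradient is genuinely Lipschitz with the expected constant. The remaining ingredients — the uniform $L^{q}$-bounds, the compactness extraction, and the H\"{o}lder plus lower-semicontinuity estimates — are routine.
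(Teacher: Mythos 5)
Your proof is correct and follows essentially the same route as the paper: uniform $W^{1,q}$ gradient bounds via H\"older, compactness to extract a uniform/weak limit $u_\infty$, passage $q\to\infty$ in the lower-semicontinuity estimate, and the segment to the nearest boundary point to force equality. The only differences are organizational (you use the crude bound \eqref{lowsp} first and close with a separate weak-lsc step yielding $s(x)\le d_\Omega(x)$, and you obtain $W^{1,\infty}$ membership from the $L^q$ bounds rather than from Morrey's inequality with the explicit constant), which does not change the substance.
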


\begin{proof}
Let $\left\{  p_{n}\right\}  _{n\in\mathbb{N}}\subset(N,\infty)$ be such that
$p_{n}\rightarrow\infty$ and fix $r>N.$ There exists $n_{0}>N$ such that
$p_{n}>r$ for every $n>n_{0}.$ Hence, by H\"{o}lder's inequality and
(\ref{est+}),
\begin{equation}
\left\Vert \nabla u_{p_{n}}\right\Vert _{r}\leq\left\Vert \nabla u_{p_{n}%
}\right\Vert _{p_{n}}\left\vert \Omega\right\vert ^{\frac{1}{r}-\frac{1}%
{p_{n}}}=(s_{p_{n}}(x))^{-1}\left\vert \Omega\right\vert ^{-1/p_{n}}\left\vert
\Omega\right\vert ^{1/r},\quad\forall\,n>n_{0}. \label{aux2}%
\end{equation}
That is, $\left\{  u_{p_{n}}\right\}  _{n>n_{0}}$ is bounded in $W_{0}%
^{1,r}(\Omega).$

Therefore, we can assume (passing to a subsequence if necessary) that
$u_{p_{n}}$ converges to a nonnegative function $u_{\infty}\in W_{0}%
^{1,r}(\Omega)\cap C(\overline{\Omega})$ uniformly in $C(\overline{\Omega})$
and weakly in $W_{0}^{1,r}(\Omega).$ The uniform convergence implies that
$u_{\infty}(x)=\left\Vert u_{\infty}\right\Vert _{\infty}=1$ (recall that
$\left\Vert u_{p_{n}}\right\Vert _{\infty}=u_{p_{n}}(x)=1,$ since $u_{p_{n}%
}\in S_{p_{n}}^{1}(x)$) whereas the weak convergence and (\ref{aux2}) yield%
\begin{equation}
\left\Vert \nabla u_{\infty}\right\Vert _{r}\leq\liminf_{n\rightarrow\infty
}\left\Vert \nabla u_{p_{n}}\right\Vert _{r}\leq\left\vert \Omega\right\vert
^{1/r}(s(x))^{-1}. \label{aux1}%
\end{equation}

Moreover, using Morrey's inequality (\ref{MorreyRN}) with the first expression
in (\ref{CpN}), we obtain in sequence%
\begin{align*}
\left\vert u_{p_{n}}(y)-u_{p_{n}}(z)\right\vert  &  \leq\frac{2Np_{n}}%
{p_{n}-N}\left\Vert \nabla u_{p_{n}}\right\Vert _{p_{n}}\left\vert
y-z\right\vert ^{1-(N/p)}\\
&  =\frac{2Np_{n}}{p_{n}-N}(s_{p_{n}}(x))^{-1}\left\vert y-z\right\vert
^{1-(N/p)},\quad\forall\,y,z\in\overline{\Omega},
\end{align*}
and
\[
\left\vert u_{\infty}(y)-u_{\infty}(z)\right\vert \leq2N(s(x))^{-1}\left\vert
y-z\right\vert ,\quad\forall\,y,z\in\overline{\Omega}.
\]
It follows that $u_{\infty}\in W^{1,\infty}(\Omega)\cap C_{0}(\overline
{\Omega}),$ so that its Lipschitz constant is $\left\Vert \nabla u_{\infty
}\right\Vert _{\infty}.$

The arbitrariness of $r>N$ allows us to let $r\rightarrow\infty$ in
(\ref{aux1}) to conclude that
\begin{equation}
\left\Vert \nabla u_{\infty}\right\Vert _{\infty}=\lim_{r\rightarrow\infty
}\left\Vert \nabla u_{\infty}\right\Vert _{r}\leq\lim_{r\rightarrow\infty
}\left\vert \Omega\right\vert ^{1/r}(s(x))^{-1}=(s(x))^{-1}. \label{est1}%
\end{equation}

Now, picking $y\in\partial\Omega$ such that $d_{\Omega}(x)=\left\vert
x-y\right\vert ,$ we obtain from (\ref{est1})%
\[
1=u_{\infty}(x)=u_{\infty}(x)-u_{\infty}(y)\leq\left\Vert \nabla u_{\infty
}\right\Vert _{\infty}\left\vert x-y\right\vert =\left\Vert \nabla u_{\infty
}\right\Vert _{\infty}d_{\Omega}(x)\leq d_{\Omega}(x)(s(x))^{-1}\leq1,
\]
from which follows that $s(x)=d_{\Omega}(x)=(\left\Vert \nabla u_{\infty
}\right\Vert _{\infty})^{-1}.$
\end{proof}

\begin{remark}
It is known that $d_{\Omega}$ is concave whenever $\Omega$ is convex. This
fact can be proved directly, but it also follows from Theorem \ref{convex1}
(Section \ref{Sec4}) and (\ref{aux3}).
\end{remark}

Following step by step the proof of Theorem 3.11 of \cite{EP} we can show that
$u_{\infty}$ is a viscosity solution of
\begin{equation}
\left\{
\begin{array}
[c]{rrll}%
\Delta_{\infty}u & = & 0 & \mathrm{in}\,\Omega\setminus\left\{  x\right\} \\
u & = & d_{\Omega}/d_{\Omega}(x) & \mathrm{on}\,\partial\Omega\cup\left\{
x\right\}  ,
\end{array}
\right.  \label{infdiric}%
\end{equation}
where
\[
\Delta_{\infty}u:=\sum_{i,j=1}^{N}u_{x_{i}}u_{x_{j}}u_{x_{i}x_{j}}%
\]
is the infinity Laplacian operator. (We refer to \cite{Lq1} to the concept of
viscosity solution.)

It turns out that (\ref{infdiric}) has a unique viscosity solution $u\in
C(\overline{\Omega}).$ This uniqueness result follows from the comparison
principle for the $\infty$-harmonic equation in the domain $\Omega
\setminus\left\{  x\right\}  ,$ which can be quoted from \cite{BB, Jensen}.

Therefore, $u_{\infty}$ is the uniform limit of the family $\left\{
u_{p}\right\}  _{p>N},$ as $p\rightarrow\infty$ (which means: $u_{p_{n}%
}\rightarrow u_{\infty}$ uniformly in $\overline{\Omega},$ for any sequence
$\left\{  u_{p_{n}}\right\}  $ with $p_{n}\rightarrow\infty$). Actually, we
have the following theorem.

\begin{theorem}
\label{Main2}Let $x\in\Omega$ be fixed and, for each $p>N,$ let $u_{p}%
\in\mathcal{M}_{p}(x)$ be the positive function such that $s_{p}%
(x)=(\left\Vert \nabla u_{p}\right\Vert _{p})^{-1}.$ The function $u_{\infty
}\in C_{0}(\overline{\Omega})\cap W^{1,\infty}(\Omega)$ is the uniform limit
in $\overline{\Omega}$ of the family $\left\{  u_{p}\right\}  ,$ as
$p\rightarrow\infty.$ Moreover, $u_{\infty}$ is strictly positive in $\Omega,$
attains its maximum value $1$ uniquely at $x$ and is the only viscosity
solution of (\ref{infdiric}).
\end{theorem}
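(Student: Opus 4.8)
The plan is to read the statement off the results already in hand. The existence of subsequential uniform limits, together with their boundary values and the gradient identity $\|\nabla u_\infty\|_\infty=(d_\Omega(x))^{-1}$, is exactly Proposition \ref{uinf}; the identification of any such limit as a viscosity solution of (\ref{infdiric}) is the assertion made just before the statement (obtained as in the proof of Theorem 3.11 of \cite{EP}: each $u_p$ is $p$-harmonic in $\Omega\setminus\{x\}$, $p$-harmonic functions converge, as $p\to\infty$, to $\infty$-harmonic functions in the viscosity sense, and the data $u_p=0$ on $\partial\Omega$, $u_p(x)=1$ pass to the limit by uniform convergence, matching $d_\Omega/d_\Omega(x)$ on $\partial\Omega\cup\{x\}$); finally, the uniqueness of the viscosity solution of (\ref{infdiric}) in $C(\overline\Omega)$ comes from the comparison principle for the $\infty$-Laplacian on $\Omega\setminus\{x\}$ \cite{BB,Jensen}. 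So the two things left to organize are the upgrade from subsequential to full convergence, and the strict positivity and uniqueness of the maximizer of $u_\infty$.

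For the convergence I would fix an arbitrary sequence $p_n\to\infty$. By Proposition \ref{uinf} it has a subsequence converging uniformly on $\overline\Omega$ to some nonnegative $w\in C_0(\overline\Omega)\cap W^{1,\infty}(\Omega)$, and by the discussion above $w$ is a viscosity solution of (\ref{infdiric}); uniqueness forces $w=u_\infty$, independent of the chosen subsequence. The standard principle ``if every subsequence of a sequence admits a further subsequence converging to $u_\infty$, then the sequence converges to $u_\infty$'' then yields $u_{p_n}\to u_\infty$ uniformly on $\overline\Omega$, and since $p_n\to\infty$ was arbitrary, $u_p\to u_\infty$ uniformly on $\overline\Omega$ as $p\to\infty$.

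For positivity and the uniqueness of the maximum point I would invoke the strong maximum principle for $\infty$-harmonic functions (a consequence of comparison with cones; see \cite{Lq1,Jensen,BB}). Assume first $N\ge2$, so that $D:=\Omega\setminus\{x\}$ is open and connected, $u_\infty$ is $\infty$-harmonic in $D$, and $0\le u_\infty\le1$ on $\overline\Omega$ (because $u_\infty\ge0$ and $u_\infty(x)=\|u_\infty\|_\infty=1$). If $u_\infty$ vanished at a point of $D$, that point would be an interior minimum, so $u_\infty\equiv0$ on $D$ and, by continuity, $u_\infty(x)=0$, a contradiction; hence $u_\infty>0$ on $D$, and also at $x$, so $u_\infty>0$ on $\Omega$. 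Likewise, since $u_\infty\equiv0$ on $\partial\Omega$ while $\|u_\infty\|_\infty=1$, every maximum point lies in $D$; were there one at some $z\ne x$, it would be an interior maximum, forcing $u_\infty\equiv1$ on $D$ and hence on $\overline\Omega$, contradicting $u_\infty|_{\partial\Omega}=0$. Thus the value $1$ is attained only at $x$. When $N=1$ the conclusion is immediate from Corollary \ref{1d}: there $u_p$ does not depend on $p$, so $u_\infty=u_p$ is the tent function (\ref{un=1}), which is positive on $(a,b)$ with a unique maximum at $x$.

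The main obstacle is concentrated in the two imported inputs: the viscosity-solution identification of the limit (the $p\to\infty$ passage in the PDE, taken from \cite{EP}) and the uniqueness for (\ref{infdiric}) (taken from the comparison principle for the $\infty$-Laplacian). Granting those, the proof is essentially bookkeeping: a subsequence argument for the convergence, and two short applications of the strong maximum principle for the strict positivity and the uniqueness of the maximizer.
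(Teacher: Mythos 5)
Your proposal is correct, and the convergence part is exactly the paper's argument: the paper also upgrades the subsequential convergence of Proposition \ref{uinf} to convergence of the whole family by combining the identification of every subsequential limit as a viscosity solution of (\ref{infdiric}) (via the proof of Theorem 3.11 of \cite{EP}) with the uniqueness coming from the comparison principle of \cite{BB,Jensen}; this is stated in the paragraph preceding Theorem \ref{Main2}. Where you diverge is in the last two claims. For strict positivity the paper invokes Harnack's inequality for $\infty$-harmonic functions on balls contained in $\Omega\setminus\{x\}$ (citing \cite{LqManf}), while you use the strong minimum principle on the connected set $\Omega\setminus\{x\}$ ($N\ge 2$); these are essentially equivalent, since the strong minimum principle is deduced from Harnack (or from comparison with cones), so your step is fine but should be referenced to that effect. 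For the uniqueness of the maximum point the paper does not use the strong maximum principle at all: it compares $u_\infty$ with the explicit cone $v(y)=1-m^{-1}|y-x|$, $m=\max\{|y-x|:y\in\partial\Omega\}$, which is $\infty$-harmonic in $\Omega\setminus\{x\}$ and dominates $u_\infty$ on $\{x\}\cup\partial\Omega$, giving $u_\infty(y)\le v(y)<1$ for $y\ne x$ directly from the same comparison principle already quoted for uniqueness. Your strong-maximum-principle argument is also valid, but it imports one more standard fact, whereas the paper's cone barrier is self-contained given the comparison principle (and works uniformly in all dimensions, so the separate $N=1$ case you treat via Corollary \ref{1d} is not needed, though it is harmless and correct).
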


\begin{proof}
Since $u_{\infty}(x)=1>0$ and $u_{\infty}=0\ $on $\partial\Omega,$ the strict
positiveness of $u_{\infty}$ in $\Omega\setminus\left\{  x\right\}  $ follows
from the Harnack's inequality for the $\infty$-harmonic equation in balls
contained in $\Omega\setminus\left\{  x\right\}  ,$ as proved in \cite{LqManf}.

To prove that $u_{\infty}$ attains its maximum value $1$ uniquely at $x$ we
apply the comparison principle for the $\infty$-harmonic equation by using the
function
\[
v(y):=1-m^{-1}\left\vert y-x\right\vert ,\quad y\in\Omega
\]
where $m:=\max\left\{  \left\vert y-x\right\vert :y\in\partial\Omega\right\}
.$

In fact, as it is easy to check, $\Delta_{\infty}v=0$ in $\Omega
\setminus\left\{  x\right\}  $ and $u_{\infty}\leq v$ on $\partial
(\Omega\setminus\left\{  x\right\}  )=\left\{  x\right\}  \cup\partial\Omega.$
Therefore, since $\Delta_{\infty}u_{\infty}=0$ in $\Omega\setminus\left\{
x\right\}  $ the comparison principle yields
\[
u_{\infty}(y)\leq v(y)=1-m^{-1}\left\vert y-x\right\vert <1=\left\Vert
u_{\infty}\right\Vert _{\infty},\quad\forall\,y\in\Omega\setminus\left\{
x\right\}  .
\]

\end{proof}

\begin{remark}
When $\Omega$ is convex $u_{p}$ is nondecreasing with respect to $p$ in
$\Omega\setminus\left\{  x\right\}  $ (see \cite[Lemma 2.4]{Jan}): if
$N<p_{1}<p_{2}$ then $u_{p_{1}}(y)\leq u_{p_{2}}(y)$ for all $y\in
\Omega\setminus\left\{  x\right\}  $. Thus, in this case, the convergence of
$u_{p}\rightarrow u_{\infty}$ is also monotone.
\end{remark}

As for the unidimensional case, we observe from (\ref{un=1}) that
$u_{p}=u_{\infty}$. So, we can verify directly that $\Delta_{\infty}u_{\infty
}=0$ in $(a,x_{0})\cup(x_{0},b).$

\section{Concavity\label{Sec4}}

In this section we assume that $\Omega$ is convex and, based on the arguments
developed in Section 4 of \cite{HL}, we show that the function $s_{p}$ is
concave. The case $N=1$ follows from a simple analysis of the expression
\ref{spunid}. So, we consider $p>N\geq2$ in this section.

\begin{remark}
\label{prop2.8HL}As we are assuming that $\Omega$ is convex, for each
$x\in\Omega$ the punctured domain $\Omega\setminus\left\{  x\right\}  $ fits
in the definition of convex ring considered by Lewis in \cite{Lew}. As
mentioned in the Introduction, the positive minimizer $u_{p}$ of $\mu_{p}(x)$
on $\mathcal{M}_{p}(x),$ given by Proposition \ref{mumin}, is the
$p$-capacitary function of $\Omega\setminus\left\{  x\right\}  .$ Thus,
according to Theorem 1 of \cite{Lew}, $u_{p}$ is real analytic in
$\Omega\setminus\left\{  x\right\}  $ and $\left\vert \nabla u_{p}\right\vert
\not =0$ in this domain. Moreover (see Proposition 2.8 and Remark 2.9 of
\cite{HL})%
\[
\lim_{y\rightarrow x}\frac{\left\vert u_{p}(y)-1\right\vert }{\left\vert
y-x\right\vert ^{\frac{p-N}{p-1}}}=\frac{p-1}{p-N}\left(  \frac{\mu_{p}%
(x)}{N\omega_{N}}\right)  ^{\frac{1}{p-1}}\quad\mathrm{and}\quad
\lim_{y\rightarrow x}\left\vert \nabla u_{p}(y)\right\vert \left\vert
y-x\right\vert ^{\frac{N-1}{p-1}}=\left(  \frac{\mu_{p}(x)}{N\omega_{N}%
}\right)  ^{\frac{1}{p-1}}.
\]

\end{remark}

\begin{theorem}
\label{convex1}If $\Omega$ is a bounded, convex domain of $\mathbb{R}^{N}$,
then the function $s_{p}$ is concave in $\Omega.$
\end{theorem}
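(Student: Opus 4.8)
The plan is to show concavity of $s_p$ via the concavity of the map $x\mapsto s_p(x)^{-\alpha}$ for an appropriate exponent, or rather by directly verifying the defining inequality of concavity,
\[
s_p(tx_0+(1-t)x_1)\ge t\,s_p(x_0)+(1-t)\,s_p(x_1),\qquad x_0,x_1\in\Omega,\ t\in(0,1),
\]
using a suitable interpolation of the capacitary functions $u_p^{(0)},u_p^{(1)}$ associated to the poles $x_0,x_1$ (given by Proposition \ref{prop0} / Theorem \ref{Main1}). The key idea, following Hynd--Lindgren \cite{HL}, is that for the $p$-capacitary problem the natural ``concavity function'' to consider is built from the Minkowski-type combination of the super-level sets of $u_p^{(0)}$ and $u_p^{(1)}$. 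Concretely, set $x_t:=tx_0+(1-t)x_1$ and define a candidate competitor $w$ on $\Omega$ by a level-set interpolation: for each $\lambda\in[0,1]$ the set $\{w>\lambda\}$ is taken to be $t\{u_p^{(0)}>\lambda\}+(1-t)\{u_p^{(1)}>\lambda\}$ (Minkowski sum), which is an open convex-ring configuration containing $x_t$ and compactly contained in $\Omega$ because $\Omega$ is convex. One then estimates the $p$-capacity of this configuration from above in terms of $\operatorname{cap}_p(\{x_0\},\Omega)$ and $\operatorname{cap}_p(\{x_1\},\Omega)$.

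The main steps, in order, would be: (i) Recall from Remark \ref{prop2.8HL} that $u_p^{(i)}$ is real-analytic with non-vanishing gradient on $\Omega\setminus\{x_i\}$, so its level sets $\{u_p^{(i)}=\lambda\}$, $0<\lambda<1$, are smooth hypersurfaces foliating the punctured domain, and the co-area formula applies cleanly. (ii) Write the $p$-Dirichlet energy of $u_p^{(i)}$ as a layer-cake integral over $\lambda$ of $\bigl(\int_{\{u_p^{(i)}=\lambda\}}|\nabla u_p^{(i)}|^{p-1}\,d\mathcal H^{N-1}\bigr)$-type quantities; since $u_p^{(i)}$ is $p$-harmonic off the pole, the inner surface integral is independent of $\lambda$ (it equals the constant flux $\mu_{p}(x_i)=s_p(x_i)^{-p}$, by testing (\ref{weakup}) against a cutoff of a sublevel set). (iii) For the interpolated function $w$, use the Minkowski/Brunn--Minkowski structure of the level sets together with Hölder's inequality to bound its flux through $\{w=\lambda\}$; the crucial convexity inequality is an application of the Brunn--Minkowski inequality (or its functional $p$-capacity form, à la Borell--Colesanti--Salani / Lewis) to the convex rings, exactly as in Section 4 of \cite{HL}. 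This yields $\operatorname{cap}_p(\{x_t\},\Omega)\le \bigl(t\,\operatorname{cap}_p(\{x_0\},\Omega)^{-1/(p-1)}+(1-t)\,\operatorname{cap}_p(\{x_1\},\Omega)^{-1/(p-1)}\bigr)^{-(p-1)}$, equivalently a concavity statement for $\operatorname{cap}_p^{-1/(p-1)}$. (iv) Translate back via the identity $s_p(x)=(\operatorname{cap}_p(\{x\},\Omega))^{-1/p}$ from the Introduction: from the capacity inequality one extracts that $x\mapsto s_p(x)$ is concave — one needs to check that the exponent bookkeeping ($-1/(p-1)$ for capacity versus $-1/p$ for $s_p$) still produces concavity of $s_p$ itself, which it does because concavity of $c^{-1/(p-1)}$ together with monotonicity gives concavity of $c^{-1/p}=\bigl(c^{-1/(p-1)}\bigr)^{(p-1)/p}$, the outer function $t\mapsto t^{(p-1)/p}$ being concave and increasing on $[0,\infty)$.

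The hard part will be step (iii): making rigorous the claim that the Minkowski-interpolated level sets define an admissible function $w$ whose $p$-capacity is controlled, and in particular verifying that $w$ is well-defined (the level sets are nested), lies in $W^{1,p}_0(\Omega)$, and satisfies $w(x_t)\ge 1$, while simultaneously pushing the Brunn--Minkowski estimate through the singular layer near the poles where $|\nabla u_p^{(i)}|$ blows up like $|y-x_i|^{-(N-1)/(p-1)}$. This is precisely where the precise asymptotics recalled in Remark \ref{prop2.8HL} are used: they guarantee that the contribution of a shrinking neighborhood of the pole to the energy is negligible in the limit, so the foliation argument is valid up to (but not including) $\lambda=1$ and the limiting flux identity holds. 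Once this technical core — the convex-ring Brunn--Minkowski inequality for $p$-capacity, borrowed essentially verbatim from \cite{HL} — is in place, the remaining manipulations are the routine layer-cake and Hölder estimates sketched above, followed by the elementary exponent argument converting concavity of the capacity functional into concavity of $s_p$.
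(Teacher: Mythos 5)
Your proposal follows essentially the same route as the paper: the Minkowski sum of super-level sets you describe is exactly the sup--min Minkowski combination $v_{\rho}$ of the capacitary functions used in the paper's proof, the ``hard part'' you identify (the flux estimate through shrinking spheres around $x_{\rho}$, the comparison $v_{\rho}\leq u_{\rho}$, and the control of the singular layer via the pole asymptotics of Remark \ref{prop2.8HL}) is precisely what the paper imports from and adapts out of Lemma 4.2 of \cite{HL}, and your concluding outer-power argument is the paper's first line. One correction to your exponent bookkeeping in steps (iii)--(iv): since $\mu_{p}(x)=\operatorname{cap}_{p}(\{x\},\Omega)$ scales like $(\mathrm{dist})^{N-p}$, and the asymptotics of Remark \ref{prop2.8HL} carry the exponents $\frac{p-N}{p-1}$ and $\frac{N-1}{p-1}$, the Brunn--Minkowski-type statement that the Hynd--Lindgren computation actually yields is concavity of $\mu_{p}^{-1/(p-N)}$ (the paper's statement), not of $\operatorname{cap}_{p}^{-1/(p-1)}$; the latter exponent would be correct only for $N=1$, which the paper treats separately. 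The slip is harmless for the final conclusion: concavity of $\mu_{p}^{-1/(p-N)}$ implies concavity of $\mu_{p}^{-1/(p-1)}$ because $\frac{p-N}{p-1}\in(0,1)$, and in any case $s_{p}=\bigl(\mu_{p}^{-1/(p-N)}\bigr)^{(p-N)/p}$ with $\frac{p-N}{p}\in(0,1)$, so the same ``concave increasing outer power'' argument you invoke closes the proof exactly as in the paper.
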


\begin{proof}
Since
\[
s_{p}=(\mu_{p})^{-1/p}=\left(  (\mu_{p})^{-1/(p-N)}\right)  ^{(p-N)/p}%
\quad\mathrm{and}\quad(p-N)/p\in(0,1)
\]
the concavity of $s_{p}$ follows once we prove that $(\mu_{p})^{-1/(p-N)}$ is concave.

Thus, in order to prove the concavity of $(\mu_{p})^{-1/(p-N)}$ we fix
$x_{0},x_{1}\in\Omega$ and $\rho\in(0,1)$ and define%
\[
x_{\rho}:=(1-\rho)x_{0}+\rho x_{1}.
\]

Let $u_{0}\in\mathcal{M}_{p}(x_{0}),$ $u_{1}\in\mathcal{M}_{p}(x_{1})$ and
$u_{\rho}\in\mathcal{M}_{p}(x_{\rho})$ denote the normalized, positive
minimizers of $\mu_{p}(x_{0}),$ $\mu_{p}(x_{1})$ and $\mu_{p}(x_{\rho}),$ respectively.

In the sequel we consider the $\rho$-Minkowski combination of $u_{0}$ and
$u_{1},$ defined by
\[
v_{\rho}(z):=\sup\left\{  \min\left\{  u_{0}(x),u_{1}(y)\right\}
:z=(1-\rho)x+\rho y,\quad x,y\in\overline{\Omega}\right\}  .
\]

It is known that $v_{\rho}\in W_{0}^{1,p}(\Omega).$ Actually, $v_{\rho}%
\in\mathcal{M}_{p}(x_{\rho})$ since $v_{\rho}(x_{\rho})=\left\Vert v_{\rho
}\right\Vert _{\infty}=1$ (which is easy to verify). Hence,%
\[
\mu_{p}(x_{\rho})\leq\left\Vert \nabla v_{\rho}\right\Vert _{p}^{p}.
\]

Following the first three steps of the proof of Lemma 4.2 of \cite{HL} we can
show that
\begin{equation}
\mu_{p}(x_{\rho})\leq\liminf_{r\rightarrow0^{+}}\int_{\partial B_{r}(x_{\rho
})}\left\vert \nabla v_{\rho}\right\vert ^{p-1}\mathrm{d}\sigma\label{gauss1}%
\end{equation}
and%
\begin{equation}
v_{\rho}\leq u_{\rho}\quad\mathrm{in}\,\Omega. \label{v<u}%
\end{equation}

Now, by adapting the remaining of the proof of Lemma 4.2 of \cite{HL} we prove
in the sequel that
\begin{equation}
\limsup_{x\rightarrow x_{\rho}}\left\vert \nabla v_{\rho}(x)\right\vert
^{p-1}\left\vert x-x_{\rho}\right\vert ^{N-1}\leq\left[  N\omega_{N}(\mu
_{p}(x_{\rho}))^{\frac{N-1}{p-N}}\left(  \frac{1-\rho}{(\mu_{p}(x_{0}%
))^{\frac{1}{p-N}}}+\frac{\rho}{(\mu_{p}(x_{1}))^{\frac{1}{p-N}}}\right)
^{p-1}\right]  ^{-1}. \label{step2}%
\end{equation}

Assuming this for a moment, noticing that%
\[
\liminf_{r\rightarrow0^{+}}\int_{\partial B_{r}(x_{\rho})}\left\vert \nabla
v_{\rho}\right\vert ^{p-1}\mathrm{d}\sigma\leq\limsup_{n\rightarrow\infty
}N\omega_{N}\left\vert \nabla v_{\rho}(z)\right\vert ^{p-1}\left\vert
z-x_{\rho}\right\vert ^{N-1},
\]
and taking (\ref{gauss1}) and (\ref{step2}) into account we arrive at%
\[
\mu_{p}(x_{\rho})\leq\left[  (\mu_{p}(x_{\rho}))^{\frac{N-1}{p-N}}\left(
\frac{1-\rho}{(\mu_{p}(x_{0}))^{\frac{1}{p-N}}}+\frac{\rho}{(\mu_{p}%
(x_{1}))^{\frac{1}{p-N}}}\right)  ^{p-1}\right]  ^{-1},
\]
which leads to
\[
\left(  \mu_{p}(x_{\rho})\right)  ^{-1}\geq(\mu_{p}(x_{\rho}))^{\frac
{N-1}{p-N}}\left(  \frac{1-\rho}{(\mu_{p}(x_{0}))^{\frac{1}{p-N}}}+\frac{\rho
}{(\mu_{p}(x_{1}))^{\frac{1}{p-N}}}\right)  ^{p-1},
\]
or, equivalently, to
\[
\left(  \mu_{p}(x_{\rho})\right)  ^{-\frac{1}{p-N}}\geq(1-\rho)(\mu_{p}%
(x_{0}))^{-\frac{1}{p-N}}+\rho(\mu_{p}(x_{1}))^{-\frac{1}{p-N}}.
\]
This shows that the function $x\mapsto(\mu_{p})^{-1/(p-N)}$ is concave.

To prove (\ref{step2}) let us take $z_{n}\rightarrow x_{\rho}$ with
$z_{n}\not =x_{\rho}$ for all $n\in\mathbb{N}$ large enough. Properties of the
function $v_{\rho}$ (see Proposition 4.1 of \cite{HL}), guarantee the
existence of sequences $\left(  x_{n}\right)  $ and $\left(  y_{n}\right)  $
such that%
\[
z_{n}=(1-\rho)x_{n}+\rho y_{n}%
\]%
\begin{equation}
v_{\rho}(z_{n})=u_{0}(x_{n})=u_{1}(y_{n}) \label{v=u0=u1}%
\end{equation}
and%
\[
\frac{1}{\left\vert \nabla v_{\rho}(z_{n})\right\vert }=\frac{1-\rho
}{\left\vert \nabla u_{0}(x_{n})\right\vert }+\frac{\rho}{\left\vert \nabla
u_{1}(y_{n})\right\vert }.
\]

Since $\Omega$ is bounded, we can assume that the sequences $\left(
x_{n}\right)  $ and $\left(  y_{n}\right)  $ are convergent, say
$x_{n}\rightarrow\overline{x}$ and $y_{n}\rightarrow\overline{y}.$ It follows
from (\ref{v=u0=u1}) that%
\[
1=v_{\rho}(x_{\rho})=u_{0}(\overline{x})=u_{1}(\overline{y})
\]
where the first equality comes from the fact that $z_{n}\rightarrow x_{\rho}.$
Noting that $1$ is the maximum value of both $u_{0}$ and $u_{1},$ assumed only
at $x_{0}$ and $x_{1},$ respectively, we conclude that $\overline{x}=x_{0}$
and $\overline{y}=x_{1}.$ Thus, $x_{n}\rightarrow x_{0}$ and $y_{n}\rightarrow
x_{1}.$

Combining (\ref{v<u}) with (\ref{v=u0=u1}) we have%
\[
\frac{\left\vert x_{n}-x_{0}\right\vert ^{\frac{p-N}{p-1}}}{1-u_{0}(x_{n}%
)}\frac{1-u_{\rho}(z_{n})}{\left\vert z_{n}-x_{\rho}\right\vert ^{\frac
{p-N}{p-1}}}\leq\frac{\left\vert x_{n}-x_{0}\right\vert ^{\frac{p-N}{p-1}}%
}{1-u_{0}(x_{n})}\frac{1-v_{\rho}(z_{n})}{\left\vert z_{n}-x_{\rho}\right\vert
^{\frac{p-N}{p-1}}}=\left(  \frac{\left\vert x_{n}-x_{0}\right\vert
}{\left\vert z_{n}-x_{\rho}\right\vert }\right)  ^{\frac{p-N}{p-1}}.
\]
Thus, it follows from Remark \ref{prop2.8HL} that
\begin{align*}
\left(  \left(  \mu_{p}(x_{0})\right)  ^{\frac{1}{p-1}}\right)  ^{-1}\left(
\left(  \mu_{p}(x_{\rho})\right)  ^{\frac{1}{p-1}}\right)   &  =\lim
_{n\rightarrow\infty}\frac{\left\vert x_{n}-x_{0}\right\vert ^{\frac{p-N}%
{p-1}}}{1-u_{0}(x_{n})}\lim_{n\rightarrow\infty}\frac{1-u_{\rho}(z_{n}%
)}{\left\vert z_{n}-x_{\rho}\right\vert ^{\frac{p-N}{p-1}}}\\
&  \leq\liminf_{n\rightarrow\infty}\left(  \frac{\left\vert x_{n}%
-x_{0}\right\vert }{\left\vert z_{n}-x_{\rho}\right\vert }\right)
^{\frac{p-N}{p-1}},
\end{align*}
so that%
\[
\left(  \frac{\mu_{p}(x_{\rho})}{\mu_{p}(x_{0})}\right)  ^{\frac{1}{p-N}}%
\leq\liminf_{n\rightarrow\infty}\frac{\left\vert x_{n}-x_{0}\right\vert
}{\left\vert z_{n}-x_{\rho}\right\vert }.
\]

Likewise, we obtain%
\[
\left(  \frac{\mu_{p}(x_{\rho})}{\mu_{p}(x_{1})}\right)  ^{\frac{1}{p-N}}%
\leq\liminf_{n\rightarrow\infty}\frac{\left\vert y_{n}-x_{1}\right\vert
}{\left\vert z_{n}-x_{\rho}\right\vert }.
\]

Hence, as
\begin{align*}
\frac{1}{\left\vert \nabla v_{\rho}(z_{n})\right\vert \left\vert z_{n}%
-x_{\rho}\right\vert ^{\frac{N-1}{p-1}}}  &  =\frac{1-\rho}{\left\vert \nabla
u_{0}(x_{n})\right\vert \left\vert z_{n}-x_{\rho}\right\vert ^{\frac{N-1}%
{p-1}}}+\frac{\rho}{\left\vert \nabla u_{1}(y_{n})\right\vert \left\vert
z_{n}-x_{\rho}\right\vert ^{\frac{N-1}{p-1}}}\\
&  =\left(  \frac{\left\vert x_{n}-x_{0}\right\vert }{\left\vert z_{n}%
-x_{\rho}\right\vert }\right)  ^{\frac{N-1}{p-1}}\frac{1-\rho}{\left\vert
\nabla u_{0}(x_{n})\right\vert \left\vert x_{n}-x_{0}\right\vert ^{\frac
{N-1}{p-1}}}\\
&  +\left(  \frac{\left\vert y_{n}-x_{1}\right\vert }{\left\vert z_{n}%
-x_{\rho}\right\vert }\right)  ^{\frac{N-1}{p-1}}\frac{\rho}{\left\vert \nabla
u_{1}(y_{n})\right\vert \left\vert y_{n}-x_{1}\right\vert ^{\frac{N-1}{p-1}}},
\end{align*}
Remark \ref{prop2.8HL} yields%
\begin{align*}
\frac{1}{\limsup\limits_{n\rightarrow\infty}\left\vert \nabla v_{\rho}%
(z_{n})\right\vert \left\vert z_{n}-x_{\rho}\right\vert ^{\frac{N-1}{p-1}}}
&  \geq\left(  \frac{\mu_{p}(x_{\rho})}{\mu_{p}(x_{0})}\right)  ^{\frac
{1}{p-N}\frac{N-1}{p-1}}\left(  \frac{N\omega_{N}}{\mu_{p}(x_{0})}\right)
^{\frac{1}{p-1}}(1-\rho)\\
&  +\left(  \frac{\mu_{p}(x_{\rho})}{\mu_{p}(x_{1})}\right)  ^{\frac{1}%
{p-N}\frac{N-1}{p-1}}\left(  \frac{N\omega_{N}}{\mu_{p}(x_{1})}\right)
^{\frac{1}{p-1}}\rho\\
&  =(N\omega_{N})^{\frac{1}{p-1}}(\mu_{p}(x_{\rho}))^{\frac{1}{p-N}\frac
{N-1}{p-1}}\left(  \frac{1-\rho}{(\mu_{p}(x_{0}))^{\frac{1}{p-N}}}+\frac{\rho
}{(\mu_{p}(x_{1}))^{\frac{1}{p-N}}}\right)  .
\end{align*}

Inequality (\ref{step2}) then follows from the arbitrariness of $z_{n}%
\rightarrow x_{\rho}.$
\end{proof}

\section{Acknowledgments}

The first author thanks the support of Funda\c{c}\~{a}o de Amparo \`{a}
Pesquisa do Estado de Minas Gerais - Fapemig/Brazil (CEX-APQ-03372-16 and
PPM-00137-18) and Conselho Nacional de Desenvolvimento Cient\'{\i}fico e
Tecnol\'{o}gico - CNPq/Brazil (306815/2017-6 and 422806/2018-8). The second
author thanks the support of Coordena\c{c}\~{a}o de Aperfei\c{c}oamento de
Pessoal de N\'{\i}vel Superior - Capes/Brazil (Finance Code 001).

\end{document}